\documentclass[12pt,reqno]{amsart}
\usepackage{hyperref}
\usepackage{amsmath,amssymb,amsthm,fullpage,graphicx,cite}

\newtheorem{theorem}{Theorem}
\newtheorem{lemma}[theorem]{Lemma}

\newtheorem{proposition}[theorem]{Proposition}
\newtheorem{conjecture}[theorem]{Conjecture}
\theoremstyle{definition}

\theoremstyle{remark}
\newtheorem{remark}[theorem]{Remark}
	
\newcommand\N{\mathbb{N}}
\newcommand\R{\mathbb{R}}

\newcommand\C{\mathbb{C}}
\newcommand\E{\mathbb{E}}
\newcommand\cD{\mathcal{D}}

\newcommand\eps{\varepsilon}
\newcommand\ndv{\mathrel{\mkern1mu{\scriptstyle\not}\mkern-2.5mu\mid}}

\begin{document}

\title{Bounds on Rudin--Shapiro polynomials of arbitrary degree}

\author{Paul Balister}

\address{Department of Mathematical Sciences,
University of Memphis, Memphis, TN 38152, USA}

\email{pbalistr@memphis.edu}

\thanks{The author was partially supported by NSF grants DMS 1600742 and DMS 1855745.}

\begin{abstract}
 Let $P_{<n}(z)$ be the Rudin--Shapiro polynomial of degree $n-1$.
 We show that $|P_{<n}(z)|\le \sqrt{6n-2}-1$ for all $n\ge0$ and $|z|=1$,
 confirming a longstanding conjecture.
 This bound is sharp in the case when $n=(2\cdot 4^k+1)/3$ and $z=1$.
 We also show that for $n\ge m\ge0$, $|P_{<n}(z)-P_{<m}(z)|\le \sqrt{10(n-m)}$,
 which is asymptotically sharp in the sense that for any $\eps>0$
 there exists $n>m\ge0$ and $z$ with $|z|=1$ and
 $|P_{<n}(z)-P_{<m}(z)|\ge\sqrt{(10-\eps)(n-m)}$,
 contradicting a conjecture of Montgomery.
\end{abstract}

\maketitle

\section{Introduction}\label{s:introduction}

The Rudin--Shapiro polynomials $P_t$ and $Q_t$ are defined by setting $P_0(z)=Q_0(z)=1$
and, for $t\ge0$, inductively defining
\begin{align*}
 P_{t+1}(z)&=P_t(z) + z^{2^t}Q_t(z),\\
 Q_{t+1}(z)&=P_t(z) - z^{2^t}Q_t(z).
\end{align*}
These polynomials were introduced independently in the 1950s by
Shapiro~\cite[p.\thinspace39]{S52} and Rudin~\cite{R59} (although the sequence
$a_n$ of their coefficients was also previously studied by Golay~\cite{G49}), and have been extensively studied
over the last few decades, see e.g.~\cite{A95,B73,BC,BLM,MR,R17,S86,DH04}.

From the definition of $P_t$ we see that the first $2^t$ terms of $P_{t+1}$ are the same as for $P_t$, and
hence $P_t$ can be thought of as the first $2^t$ terms of an infinite power series
\[
 P_\infty(z):=\sum_{n=0}^\infty a_nz^n,
\]
where the coefficients $a_n\in\{-1,1\}$ can also be defined~\cite{B73} by the relations
\begin{equation}\label{e:sequence}
 a_0=1,\qquad a_{2n}=a_n,\qquad\text{and}\qquad a_{2n+1}=(-1)^na_n.
\end{equation}
Alternatively, writing $n=\sum_i b_i2^i$, $b_i\in\{0,1\}$, we have that~\cite{BC}
\[
 a_n=(-1)^{\sum_i b_ib_{i+1}},
\]
i.e., $a_n$ is determined by the parity of the number of `11's in the binary expansion of~$n$.

For $n\ge0$ write
\[
 P_{<n}(z):=\sum_{i=0}^{n-1}a_iz^i
\]
for the first $n$ terms of $P_\infty(z)$ so that, for $n>0$, $P_{<n}(z)$ is a polynomial of degree $n-1$, and
$P_t(z)=P_{<2^t}(z)$. For $n\ge m\ge0$ write
\[
 P_{[m,n)}(z):=P_{<n}(z)-P_{<m}(z)=\sum_{i=m}^{n-1}a_iz^i
\]
for the polynomial with $n-m$ terms consisting of the terms of $P_\infty(z)$
from $z^m$ to $z^{n-1}$.

Shapiro \cite{S52} has shown that for $|z|=1$, $|P_{<n}(z)|\le C\sqrt{n}$ for all $n$, where $C=2+\sqrt{2}\approx 3.41$, and
Saffari~\cite{S86} has sketched a proof that $C=(2+\sqrt{2})\sqrt{3/5}\approx 2.64$ suffices.
However, according to~\cite{M17} it has `long  been conjectured' that $C=\sqrt{6}\approx 2.45$
is sufficient, and indeed it is known that this is the best possible constant as
$|P_{<n}(1)|=2^{k+1}-1=\sqrt{6n-2}-1$ when $n=(2\cdot 4^k+1)/3$. In~\cite{A95} it is claimed that
Saffari proved this conjecture, but it appears that the proof is unpublished.
In this paper we give a proof of this conjecture in the following strong form.

\begin{theorem}\label{t:onedim}
 $|P_{<n}(z)|\le \sqrt{6n-2}-1$ for all\/ $n\ge1$ and\/ $|z|=1$.
\end{theorem}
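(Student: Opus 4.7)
The plan is to proceed by induction on $n$, using the self-similar identities
\[
P_{<2m}(z)=P_{<m}(z^2)+z\,P_{<m}(-z^2),\qquad P_{<2m+1}(z)=P_{<2m}(z)+a_m z^{2m},
\]
which follow directly from the coefficient recursions $a_{2i}=a_i$ and $a_{2i+1}=(-1)^i a_i$. Setting $u=z^2$, $A=P_{<m}(u)$ and $B=P_{<m}(-u)$, the doubling identity gives $|P_{<2m}(z)|^2=|A|^2+|B|^2+2\Re(\bar{A}Bz)$, so maximising over the two square roots of $u$ yields $\max_{|z|=1}|P_{<2m}(z)|^2\le(|A|+|B|)^2$; the bound at level $2m$ is thereby controlled by a joint estimate on the pair $(|P_{<m}(u)|,|P_{<m}(-u)|)$.

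However, applying the conjectured single-point bound $|P_{<m}|\le\sqrt{6m-2}-1$ separately to $A$ and $B$ only gives $|A|+|B|\le 2\sqrt{6m-2}-2$, and a direct calculation shows this already exceeds the target $\sqrt{6(2m)-2}-1$ once $m$ is moderately large. Naive induction therefore cannot close, and the inductive hypothesis must be strengthened to a genuine coupled bound involving a natural companion polynomial.

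My plan is to introduce, for each $n$, an auxiliary polynomial $Q_{<n}(z)$ that coincides with the classical Rudin--Shapiro partner $Q_t=2P_{t-1}-P_t$ when $n=2^t$, and is extended to general $n$ via the binary decomposition $n=\sum_{j=1}^k 2^{t_j}$ (with partial sums $n_j=\sum_{i\le j}2^{t_i}$) by the formula $Q_{<n}(z)=\sum_j a_{n_{j-1}}z^{n_{j-1}}Q_{t_j}(z)$, and then to prove by simultaneous induction a joint inequality of the shape
\[
(|P_{<n}(z)|+1)^2+|Q_{<n}(z)|^2\le 6n-1.
\]
This reduces to the classical identity $|P_t|^2+|Q_t|^2=2\cdot 2^t$ at $n=2^t$ (loose), implies the theorem on dropping the nonnegative $|Q_{<n}|^2$ term, and is saturated at $(n,z)=(1,1)$ and at $(n,z)=((2\cdot 4^k+1)/3,1)$, where one can check directly that $|Q_{<n}(1)|=1$, so $(|P_{<n}(1)|+1)^2+|Q_{<n}(1)|^2=(6n-2)+1=6n-1$. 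The induction step for even $n=2m$ should follow by combining the joint inequalities at $u$ and $-u$ with a parallel doubling identity for $Q_{<n}$, in the spirit of the classical computation $|P_{t+1}|^2+|Q_{t+1}|^2=2(|P_t|^2+|Q_t|^2)$; the odd step $2m\mapsto 2m+1$ modifies the pair by a unit-modulus monomial and produces an error that must be absorbed by the increment of the right-hand side.

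The principal obstacle is to verify that the joint inequality truly propagates through both cases of the recursion: sharpness at the extremal sequence $n=(2\cdot 4^k+1)/3$ (binary $1010\dots 011$, which straddles both parities) essentially forces the form of the correction to be unique, and the induction step must preserve equality there. Finding the right doubling identity for $Q_{<n}$ — for a generic $m$ it is \emph{not} simply $Q_{<2m}(z)=P_{<m}(z)-z^m Q_{<m}(z)$, which only holds when $m=2^t$ — and closing the induction in all cases is the technical heart of the proof; once the correct formulation is in hand, the remainder should reduce to a routine, if intricate, calculation.
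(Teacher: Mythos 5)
Your plan correctly identifies the central difficulty — a naive induction on $n$ using only the single‑point bound $|P_{<m}|\le\sqrt{6m-2}-1$ fails, because $2\sqrt{6m-2}-2 > \sqrt{12m-2}-1$ for $m$ large — and the idea of strengthening the induction hypothesis by tracking a companion quantity alongside $P_{<n}$ is exactly the right instinct. It is also genuinely different from what the paper does: the paper does not introduce a companion polynomial $Q_{<n}$ at all. Instead it works with the \emph{pair} $(P_{<n}(z),P_{<n}(-z))$ through the norm $\|P\|_L^2=\sup_{|z|=1}\bigl(|P(z)|^2+|P(-z)|^2\bigr)$, whose crucial feature is the \emph{exact} scaling $\|P_{<2n}\|_L^2=2\|P_{<n}\|_L^2$ (a direct consequence of the parallelogram law applied to the doubling identity). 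This makes $f(x)=\|P_{<n}\|_L^2$ a continuous function on $[0,\infty)$ with $f(2x)=2f(x)$, and the bound $f(x)\le 6x$ is then established by a mixture of analytic estimates and computer verification of $f$ near the dyadic rationals in $[\tfrac{11}{8},2]$; the final $-1$ in $\sqrt{6n-2}-1$ is recovered by a separate case analysis on the range of $n$ modulo powers of two.

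The gap in your proposal is that the inductive step — which you yourself flag as ``the technical heart of the proof'' — is not carried out, and there is concrete evidence it will not close in the clean form you are hoping for. Your $Q_{<n}(z)=\sum_j a_{n_{j-1}}z^{n_{j-1}}Q_{t_j}(z)$ does \emph{not} satisfy a doubling identity $Q_{<2m}(z)=Q_{<m}(z^2)\pm z\,Q_{<m}(-z^2)$. The reason is that the classical identity $Q_{t+1}(z)=Q_t(z^2)+z\,Q_t(-z^2)$ only holds for $t\ge1$; at $t=0$ one instead has $Q_1(z)=Q_0(z^2)-z\,Q_0(-z^2)$. So whenever $m$ is odd (i.e., $t_k=0$ in the binary decomposition), the final block picks up the opposite sign from the others. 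For instance, with $m=3$ one computes $Q_{<6}(z)=1+z-z^2+z^3+z^4-z^5$, whereas $Q_{<3}(z^2)+zQ_{<3}(-z^2)=1+z-z^2+z^3+z^4+z^5$: the coefficients of $z^5$ disagree. Thus the ``parallel doubling identity for $Q_{<n}$'' you invoke does not exist as stated, and without it the proposed inductive scheme for $(|P_{<n}(z)|+1)^2+|Q_{<n}(z)|^2\le 6n-1$ has no engine. It is possible that a corrected companion could be found, but that would require a new idea, and one should be skeptical in advance that a short pointwise inductive argument exists at all: the paper's proof, even with the more robust $\|\cdot\|_L$ machinery, still needs computer verification on a nontrivial range of $x$ (roughly $[\tfrac{11}{8},2]$), and Figure~1 of the paper shows $f(\tfrac{83}{48})>8$, illustrating how delicate the bound is in precisely the region your induction would have to traverse. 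As written, then, this is a plausible-looking program rather than a proof.
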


In \cite{M17} Montgomery made the following conjecture about the polynomials $P_{[m,n)}$.

\begin{conjecture}\label{c:montgomery}
 $|P_{[m,n)}(z)|\le 3\sqrt{n-m}$ for all\/ $n\ge m\ge0$ and\/ $|z|=1$.
\end{conjecture}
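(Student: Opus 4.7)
The plan is to adapt the dyadic-decomposition approach that underlies the $O(\sqrt n)$ bound for $|P_{<n}(z)|$ to the difference $P_{[m,n)}(z)$. First I would decompose $[m,n)$ as a disjoint union of maximal dyadic blocks $I_j = [c_j, c_j + 2^{t_j})$, with each $c_j$ a multiple of $2^{t_j}$. Working greedily inward from both ends of $[m,n)$ produces a ``staircase'' structure: two monotone subsequences of block sizes, with each value of $t$ appearing at most twice.

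The self-similarity of the Rudin--Shapiro sequence encoded in~\eqref{e:sequence} implies that for each block
\[
 \sum_{i \in I_j} a_i z^i \;=\; \eta_j\, z^{c_j}\, R_j(z),
\]
where $|\eta_j|=1$ and $R_j \in \{P_{t_j},\,Q_{t_j}\}$ is determined by the binary expansion of $c_j/2^{t_j}$. Combined with the classical identity $|P_t(z)|^2 + |Q_t(z)|^2 = 2^{t+1}$ on $|z|=1$, this gives $|R_j(z)| \le 2^{(t_j+1)/2}$ and hence a crude triangle-inequality bound $|P_{[m,n)}(z)| \le \sum_j 2^{(t_j+1)/2} = O(\sqrt{n-m})$, but with a constant well above $3$. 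To recover the sharp value~$3$, I would set up a joint induction that simultaneously controls $P_{[m,n)}(z)$ together with one or two ``$Q$-variant'' partial sums (mirroring how Theorem~\ref{t:onedim} couples $P_t$ with $Q_t$), with the aim of merging two adjacent scale-$2^t$ blocks into a single scale-$2^{t+1}$ block at the cost of a favourable quadratic inequality.

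The step I expect to be the real obstacle is controlling the interference between the two staircases emerging from the left endpoint $m$ and the right endpoint $n$. Unlike the case $m=0$, where all dyadic contributions form a single staircase whose phases fit into a clean inductive framework, the blocks on the two sides here carry essentially unrelated factors $\eta_j z^{c_j}$ whose alignment depends on $z$, $m$, and $n$ simultaneously. If those two staircases can be forced to reinforce rather than cancel, the constant~$3$ might not be attainable at all; accordingly, before committing to this strategy I would first stress-test the conjecture numerically on pairs $(m, n-m)$ whose sizes are close to the extremal values $(2\cdot 4^k+1)/3$ that already saturate the single-staircase bound of Theorem~\ref{t:onedim}, since such configurations seem the most likely place where Montgomery's constant could be violated.
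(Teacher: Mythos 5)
Your closing caveat is exactly right, and you should follow it rather than the proof plan: the conjecture is \emph{false}, and the paper's role is to refute it, not to prove it. The extremal configurations you propose to stress-test are precisely the culprits. Take
\[
 m_k=\frac{5\cdot 4^k+1}{3},\qquad n_k=\frac{8\cdot 4^k+1}{3},
\]
so $n_k-m_k=4^k$. At $z=1$ one has $P_{[m_k,n_k)}(1)=3\cdot 2^k-2$, which is consistent with the constant~$3$; Montgomery's numerical evidence was based on $z$ near $1$. But for $k$ large the maximum over the circle migrates away from $z=1$, and your ``reinforcing staircases'' phenomenon kicks in. Concretely, using the recursion $P_{[m_{k+1},n_{k+1})}(z)=(1+z)P_{[m_k,n_k)}(z^4)+(z^2-z^3)P_{[m_k,n_k)}(-z^4)+z^{m_{k+1}}+z^{n_{k+1}}$ together with $P_{[m_k,n_k)}(1)=3\cdot 2^k-2$ and $P_{[m_k,n_k)}(-1)=-2^k+2$, the choice $z=e^{3\pi i/4}$ (so $z^4=-1$) gives
\[
 \lim_{k\to\infty}\frac{|P_{[m_k,n_k)}(e^{3\pi i/4})|^2}{n_k-m_k}=5+\frac{7}{\sqrt{2}}\approx 9.95>9,
\]
already violating the bound $3\sqrt{n-m}$. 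In fact the sharp constant is $\sqrt{10}$: Theorem~\ref{t:twodim} proves $|P_{[m,n)}(z)|\le\sqrt{10(n-m)}$, and Theorem~\ref{t:dense} combined with the evaluation $\|P_{[m_k,n_k)}\|_L^2\ge (3\cdot 2^k-2)^2+(2^k-2)^2$ shows this is asymptotically attained along the same sequence $(m_k,n_k)$.

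Two further comments on your decomposition plan itself, should you redirect it toward the corrected bound $\sqrt{10(n-m)}$. First, the naive triangle-inequality bound $\sum_j 2^{(t_j+1)/2}$ from the two-sided staircase costs an unbounded factor since block sizes can repeat indefinitely after rescaling; the paper tames this by working with the norm $\|P\|_L=\sup_{|z|=1}\sqrt{|P(z)|^2+|P(-z)|^2}$, which satisfies the exact scaling $\|P_{[2m,2n)}\|_L^2=2\|P_{[m,n)}\|_L^2$ and so reduces everything to a compact parameter window $(x,y)\in[0,2]\times[2,4]$. Second, the interference between the left and right staircases that you flagged as the obstacle is handled via the auxiliary quantity $g(r,s)=\sup_{|\alpha|=1}\|P_{<s}(z)+\alpha z^{-1}P_{<r}(-z^{-1})\|_L^2$: introducing the free phase $\alpha$ deliberately forgets the relative alignment of the two sides, which loses a little but makes the recursion $g(1+x,2+y)^{1/2}\le\sqrt{10}+g(x,y)^{1/2}$ close cleanly near the critical point. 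Your instinct that the alignment is the crux is correct; the resolution is not to control it but to bound the worst case over all alignments.
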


The basis for this conjecture was numerical evidence that suggested the worst case was
when
\[
 m_k:=\frac{5\cdot 4^k+1}{3},\qquad n_k:=\frac{8\cdot 4^k+1}{3}
\]
and $z=1$, in which case $|P_{[m_k,n_k)}(1)|=3\cdot 2^k-2=3\sqrt{n_k-m_k}-2$.

Unfortunately this conjecture turns out to be false. The example polynomial is correct, but for large $k$
the largest value of $|P_{[m_k,n_k)}(z)|$ no longer occurs at $z=1$.
Indeed, it is not hard to show that
\[
 \lim_{k\to\infty}\frac{\big|P_{[m_k,n_k)}(e^{3\pi i/4})\big|^2}{n_k-m_k}=5+\tfrac{7}{\sqrt{2}}\approx  9.95,
\]
and even this is not the worst case when $k$ is very large.
Unfortunately the value of $z$ that maximizes $|P_{[m_k,n_k)}(z)|$ appears to be a highly
erratic function of~$k$, and so we are unable to give an explicit sequence $z_k$ with
$|P_{[m_k,n_k)}(z_k)|^2/(n_k-m_k)\to 10$. Nevertheless we show (in Section~\ref{s:dense}) that
\begin{equation}\label{e:max10}
 \lim_{k\to\infty}\frac{\sup_{|z|=1}|P_{[m_k,n_k)}(z)|^2}{n_k-m_k}=10.
\end{equation}
We also prove that this is asymptotically the worst case.

\begin{theorem}\label{t:twodim}
 $|P_{[m,n)}(z)|\le \sqrt{10(n-m)}$ for all\/ $n\ge m\ge0$ and all\/ $z$ with\/ $|z|=1$.
\end{theorem}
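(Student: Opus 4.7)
The plan is to track the pair of conjugate partial sums
\[
\tf(z) := P_{[m,n)}(z),\qquad \tg(z) := \sum_{i=m}^{n-1}(-1)^i a_i z^i,
\]
which together satisfy a clean halving relation. Splitting indices in $[m,n)$ by parity and using \eqref{e:sequence}, one checks that
\[
\tf(z) = F(z^2) + z\,G(z^2),\qquad \tg(z) = F(z^2) - z\,G(z^2),
\]
where $F$ is a partial sum of $\tf$-type over the even-index subinterval of $[m,n)$ and $G$ is a partial sum of $\tg$-type over the odd-index subinterval. Consequently $|\tf(z)|^2 + |\tg(z)|^2 = 2(|F(z^2)|^2 + |G(z^2)|^2)$, so the halving preserves the natural quadratic energy.

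Iterating this halving decomposes $\tf$ via the dyadic decomposition of $[m,n)$: writing $[m,n) = \bigsqcup_j [c_j 2^{k_j},\,(c_j{+}1)2^{k_j})$ as a disjoint union of maximal dyadic intervals (of which there are $O(\log(n-m))$), one obtains $\tf(z) = \sum_j \eps_j z^{c_j 2^{k_j}} R_j(z)$, where each $R_j$ is, up to a unimodular factor, a Rudin--Shapiro polynomial $\pm P_{k_j}$ or $\pm Q_{k_j}$ (the choice determined by the parity of $c_j$). Applying Theorem~\ref{t:onedim} (or the classical identity $|P_t(z)|^2 + |Q_t(z)|^2 = 2^{t+1}$ when $R_j$ is a full block) bounds each summand individually, and a bound of the form $|\tf(z)|^2 \le C(n-m)$ would follow after combining the contributions.

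The main obstacle is achieving the sharp constant $C=10$. Since the paper already exhibits a sequence with $|\tf|^2/(n-m) \to 10$, the argument can afford essentially no slack; in particular, combining the dyadic-block bounds via the triangle inequality gives only the classical Shapiro-type constant $(2+\sqrt{2})^2$, which is far too large. To tighten the estimate I expect to retain the exact halving identity $|\tf|^2 = |F|^2 + |G|^2 + 2\Re(z\bar F G)$ at each level and carry a weighted quadratic invariant of the form $\alpha|\tf|^2 + \beta|\tg|^2$, with $\alpha,\beta$ tuned so that the halving recursion is saturated precisely at the extremal configurations near $z = e^{3\pi i/4}$ underlying \eqref{e:max10}. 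Verifying that this tuned invariant is preserved under the halving, and that the boundary corrections arising from the endpoint parities of $m$ and $n$ do not spoil it, would be the crux of the argument.
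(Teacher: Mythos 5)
Your opening moves are sound and in fact coincide with the paper's: since $\tilde g(z)=\tilde f(-z)$, your quadratic energy $|\tilde f(z)|^2+|\tilde g(z)|^2$ is precisely $\|P_{[m,n)}\|_L^2$ in the paper's notation, and your halving identity is Lemma~\ref{l:scale}. But your proposal stops at the crux without resolving it, and the fix you suggest would not work. A weighted invariant $\alpha|\tilde f(z)|^2+\beta|\tilde f(-z)|^2$ with $\alpha\ne\beta$ does not scale cleanly under halving: from $\tilde f(\pm z)=F(z^2)\pm zG(z^2)$ one computes
\[
 \alpha|\tilde f(z)|^2+\beta|\tilde f(-z)|^2=(\alpha+\beta)\big(|F(z^2)|^2+|G(z^2)|^2\big)+2(\alpha-\beta)\Re\big(F(z^2)\,\overline{zG(z^2)}\big),
\]
and the cross term does not close up under iteration. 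Moreover, tuning to saturate at $z=e^{3\pi i/4}$ is aimed at the wrong target: the paper explicitly notes that for large $k$ the maximizing $z$ for $P_{[m_k,n_k)}$ wanders away from $e^{3\pi i/4}$ erratically.

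The missing ingredient is a way to control the interaction between the two halves of $P_{[m,n)}=P_{[m,2^k)}+P_{[2^k,n)}$ when $m<2^k<n$. Writing $r=2^k-m$ and $s=n-2^k$, Proposition~\ref{p:1}(c) gives $P_{[m,2^k)}(z)=\pm z^{2^k-1}P_{<r}(-z^{-1})$ while $P_{[2^k,n)}(z)=\pm z^{2^k}P_{<s}(z)$, and the relative phase between the two pieces is exactly what you cannot afford to discard by the triangle inequality if the constant is to come out as $10$. The paper's device is the auxiliary quantity
\[
 g(r,s)=\sup_{|\alpha|=1}\big\|P_{<s}(z)+\alpha z^{-1}P_{<r}(-z^{-1})\big\|_L^2,
\]
which deliberately forgets the exact phase, so that $f(2-x,2+y)\le g(x,y)$ (Lemma~\ref{l:compfg}), while crucially keeping the \emph{same} $\alpha$ at both $z$ and $-z$ inside the $\|\cdot\|_L$ norm. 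The sharp constant then comes from the subadditivity estimate $g(1+x,2+y)^{1/2}\le\sqrt{10}+g(x,y)^{1/2}$ of Lemma~\ref{l:gcrit} near the critical rectangle $[1,\tfrac32]\times[2,3]$, together with a computer-assisted covering argument (Lemma~\ref{l:gbound}, Figure~\ref{f:2}) over the rest of $[0,2]\times[0,4]$ where the inequality is strict. None of these ingredients appears in your outline, and without them a dyadic-block decomposition cannot get below constants on the order of $(2+\sqrt2)^2$, as you yourself anticipate.
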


We prove Theorem~\ref{t:onedim} in Section~\ref{s:onedim} and Theorem~\ref{t:twodim} in Section~\ref{s:twodim}.
Equation~\eqref{e:max10} follows from Theorem~\ref{t:dense} below, which is a consequence of the proofs
of the results of Rodgers~\cite{R17} on the distribution of $P_t(z)/2^{(t+1)/2}$.
We prove Theorem~\ref{t:dense} and equation~\eqref{e:max10} in Section~\ref{s:dense}.

\section{The $L$-norm.}\label{s:norm}

We list a few  well-known properties of the polynomials $P_t$ and $Q_t$
which easily follow by induction, and can be found in, for example,~\cite{M17}.

\begin{proposition}\label{p:1} We have the following identities.
\begin{itemize}
\item[(a)] $|P_t(z)|^2+|Q_t(z)|^2=2^{t+1}$ for all $|z|=1$. In particular $|P_t(z)|,|Q_t(z)|\le 2^{(t+1)/2}$.
\item[(b)] $P_{t+k+1}(z)=P_k(z)P_t(z^{2^{k+1}})+z^{2^k}Q_k(z)P_t(-z^{2^{k+1}})$.\\
In particular $P_{t+1}(z)=P_t(z^2)+zP_t(-z^2)$.
\item[(c)] $Q_t(z)=(-1)^t z^{2^t-1}P_t(-z^{-1})$ and\/ $P_t(z)=(-1)^{t+1} z^{2^t-1}Q_t(-z^{-1})$.
\end{itemize}
\end{proposition}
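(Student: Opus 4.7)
The plan is to prove all three parts by induction on $t$; they are largely independent, and the work is essentially algebraic bookkeeping.

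For part~(a), the base case $t=0$ gives $|P_0|^2+|Q_0|^2=2=2^1$. For the inductive step I would apply the parallelogram identity $|u+v|^2+|u-v|^2 = 2|u|^2+2|v|^2$ with $u=P_t(z)$ and $v=z^{2^t}Q_t(z)$. Since $|z|=1$ we have $|v|=|Q_t(z)|$, so the defining recursions for $P_{t+1}$ and $Q_{t+1}$ yield $|P_{t+1}|^2+|Q_{t+1}|^2 = 2(|P_t|^2+|Q_t|^2)=2\cdot 2^{t+1}=2^{t+2}$, closing the induction.

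For part~(b), I would first establish the $k=0$ special case $P_{t+1}(z)=P_t(z^2)+zP_t(-z^2)$ by direct coefficient comparison, using $a_{2i}=a_i$ and $a_{2i+1}=(-1)^ia_i$ from~\eqref{e:sequence}: the right-hand side expands to $\sum_{i<2^t}\bigl(a_iz^{2i}+(-1)^ia_iz^{2i+1}\bigr)=\sum_{j<2^{t+1}}a_jz^j=P_{t+1}(z)$. For general $k$ I would then induct on $k$ (with $t$ arbitrary), the base $k=0$ being trivial since $P_0=Q_0=1$. In the step $k\to k+1$, I substitute $P_{k+1}=P_k+z^{2^k}Q_k$ and $Q_{k+1}=P_k-z^{2^k}Q_k$ into the target right-hand side and regroup, so that the bracketed combinations become $P_t(z^{2^{k+2}})\pm z^{2^{k+1}}P_t(-z^{2^{k+2}})$, which equal $P_{t+1}(\pm z^{2^{k+1}})$ by the $k=0$ case. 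The inductive hypothesis at~$k$, applied with $t$ replaced by $t+1$, then identifies the resulting expression with $P_{t+k+2}(z)$, as desired.

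For part~(c), I induct on $t$ to establish the first identity $Q_t(z)=(-1)^tz^{2^t-1}P_t(-z^{-1})$. The base $t=0$ reads $1=1$. In the inductive step, I would start from $Q_{t+1}(z)=P_t(z)-z^{2^t}Q_t(z)$, apply the inductive hypothesis to $Q_t(z)$, and match the result against $(-1)^{t+1}z^{2^{t+1}-1}P_{t+1}(-z^{-1})$ using the recursion $P_{t+1}(-z^{-1})=P_t(-z^{-1})+(-z^{-1})^{2^t}Q_t(-z^{-1})$; the powers of $-1$ and $z$ then reconcile. The companion identity $P_t(z)=(-1)^{t+1}z^{2^t-1}Q_t(-z^{-1})$ follows by substituting $z\mapsto-z^{-1}$ in the first identity and simplifying. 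The only real obstacle—minor—is keeping the sign exponents straight in this last step, specifically the parity of $2^t-1$ and of $(-z^{-1})^{2^t}$.
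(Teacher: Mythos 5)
The paper does not actually prove this proposition; it records the identities as well known, remarks that they ``easily follow by induction,'' and refers the reader to \cite{M17}. Your inductive verifications are correct and supply exactly what the paper omits: part (a) via the parallelogram law, part (b) by establishing the $k=0$ case through coefficient comparison with \eqref{e:sequence} and then inducting on $k$ using the defining recursions for $P_{k+1},Q_{k+1}$ (with $t$ shifted to $t+1$ when invoking the inductive hypothesis), and part (c) by induction on $t$.

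One wrinkle is worth making explicit in part (c). When you substitute $z\mapsto -z^{-1}$ into $Q_t(z)=(-1)^t z^{2^t-1}P_t(-z^{-1})$ to obtain the companion identity, the factor $(-z^{-1})^{2^t-1}$ contributes a sign $(-1)^{2^t-1}$, which equals $-1$ for $t\ge1$ but $+1$ for $t=0$. Consequently the stated form $P_t(z)=(-1)^{t+1}z^{2^t-1}Q_t(-z^{-1})$ holds only for $t\ge1$; at $t=0$ it would read $1=-1$. You flag the parity issue but do not resolve it, and a finished write-up should state the restriction. This is harmless for the paper, which only invokes part (c) through the sign-agnostic form $Q_t(z)=\pm z^{2^t-1}P_t(-z^{-1})$ (see \eqref{e:qtnorm}, Lemma~\ref{l:fcont}, and the proof of Theorem~\ref{t:twodim}), but a proof claiming the identity verbatim would be incorrect at $t=0$.
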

Part (b) is particularly noteworthy as it shows that $P_\infty(z)$ is made up of alternate $\pm P_k$ and $\pm Q_k$ blocks,
namely $a_n z^{n2^k}P_k(z)$ for $n$ even and $a_n z^{n2^k}Q_k(z)$ for $n$ odd.

For $P\in\C[z,z^{-1}]$, define
\[
 \|P\|_\infty=\sup_{|z|=1}|P(z)|
\]
and\footnote{The subscript $L$ stands for `Limit', see Theorem~\ref{t:dense}.}
\[
 \|P\|_L=\sup_{|z|=1}\sqrt{|P(z)|^2+|P(-z)|^2}.
\]

\begin{lemma}
 $\|.\|_L$ is a norm on the vector space $\C[z,z^{-1}]$.
\end{lemma}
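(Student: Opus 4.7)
The plan is to verify the four norm axioms in turn, with the main content being the triangle inequality, which will follow from the triangle inequality for the standard Euclidean norm on $\C^2$.

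The key observation is that for each fixed $z$ with $|z|=1$, if we define the map $\Phi_z\colon\C[z,z^{-1}]\to\C^2$ by $\Phi_z(P)=(P(z),P(-z))$, then $\Phi_z$ is $\C$-linear and
\[
 \sqrt{|P(z)|^2+|P(-z)|^2}=\|\Phi_z(P)\|_2,
\]
where $\|\cdot\|_2$ denotes the usual Hermitian norm on $\C^2$. Thus $\|P\|_L=\sup_{|z|=1}\|\Phi_z(P)\|_2$.

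Non-negativity is immediate from the definition. Homogeneity follows since $\Phi_z(\lambda P)=\lambda\Phi_z(P)$ and $\|\lambda v\|_2=|\lambda|\,\|v\|_2$, so taking the supremum over $z$ gives $\|\lambda P\|_L=|\lambda|\,\|P\|_L$. For the triangle inequality, for each fixed $z$ with $|z|=1$,
\[
 \|\Phi_z(P+Q)\|_2=\|\Phi_z(P)+\Phi_z(Q)\|_2\le \|\Phi_z(P)\|_2+\|\Phi_z(Q)\|_2\le \|P\|_L+\|Q\|_L,
\]
using linearity of $\Phi_z$ and the triangle inequality in $\C^2$; taking the supremum over $z$ with $|z|=1$ on the left gives $\|P+Q\|_L\le\|P\|_L+\|Q\|_L$.

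The only slightly nontrivial axiom is definiteness: we must show that $\|P\|_L=0$ forces $P=0$. If $\|P\|_L=0$, then $|P(z)|^2+|P(-z)|^2=0$ for every $z$ on the unit circle, so in particular $P(z)=0$ for all $|z|=1$. Writing $P(z)=z^{-N}\tilde P(z)$ for some ordinary polynomial $\tilde P\in\C[z]$ and some $N\ge0$, $\tilde P$ has infinitely many zeros and hence vanishes identically, giving $P=0$ in $\C[z,z^{-1}]$. This completes the verification of all four axioms, so no step is a genuine obstacle; the triangle inequality is the only substantive point and is handled by reducing pointwise to the Euclidean norm.
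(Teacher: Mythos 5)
Your proof is correct and uses essentially the same idea as the paper's: reduce the triangle inequality pointwise to the Euclidean norm on $\C^2$ via the map $P\mapsto(P(z),P(-z))$ and then take the supremum over $|z|=1$. You are somewhat more explicit than the paper (which dismisses definiteness and homogeneity as clear), but the substance is identical.
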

\begin{proof}
The fact that $\|P\|_L\ge0$ with equality iff $P=0$ is clear, so
it remains to prove that $\|P+Q\|_L\le\|P\|_L+\|Q\|_L$ for any $P,Q\in\C[z,z^{-1}]$. Now
\begin{align*}
 \big(|P(z)+Q(z)|^2+|P(-z)+Q(-z)|^2\big)^{1/2}
 &=  \big\|(P(z)+Q(z),P(-z)+Q(-z))\big\|_2\\
 &\le \big\|(P(z),P(-z))\big\|_2+\big\|(Q(z),Q(-z))\big\|_2\\
 &\le \|P\|_L+\|Q\|_L,
\end{align*}
where $\|(u,v)\|_2=\sqrt{|u|^2+|v|^2}$ is the standard $\ell_2$ norm on~$\C^2$
(here $z$ is fixed). The result now follows by taking the supremum over $|z|=1$.
\end{proof}

The advantage of $\|\cdot\|_L$ is that, unlike $\|\cdot\|_\infty$,
it scales well on Rudin--Shapiro polynomials, and thus allows us to effectively bound $P_{[m,n)}$ for
arbitrarily large $m$ and~$n$.

\begin{lemma}\label{l:scale}
 For any\/ $n\ge m\ge0$, $\|P_{[2m,2n)}\|_L^2=2\|P_{[m,n)}\|_L^2$ and\/ $\|P_{<2n}\|_L^2=2\|P_{<n}\|_L^2$.
\end{lemma}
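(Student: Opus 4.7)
The plan is to exploit the two-to-one substitution $z\mapsto z^2$ on the unit circle together with a parallelogram-identity trick that is custom-designed for the $\|\cdot\|_L$ norm.

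First I would establish the splitting identity
\[
 P_{<2n}(z)=P_{<n}(z^2)+zP_{<n}(-z^2),
\]
and more generally
\[
 P_{[2m,2n)}(z)=P_{[m,n)}(z^2)+zP_{[m,n)}(-z^2).
\]
This follows directly by separating even- and odd-indexed terms in $\sum_{i=2m}^{2n-1}a_iz^i$ and applying the recurrence $a_{2j}=a_j$, $a_{2j+1}=(-1)^ja_j$ from~\eqref{e:sequence}; it is also precisely the content of Proposition~\ref{p:1}(b) restricted to partial sums. This is the only coefficient-level input needed.

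Next, I would evaluate at $z$ and at $-z$. Writing $A=P_{[m,n)}(z^2)$ and $B=zP_{[m,n)}(-z^2)$, the identity above gives $P_{[2m,2n)}(z)=A+B$, while substituting $-z$ (and using $(-z)^2=z^2$) gives $P_{[2m,2n)}(-z)=A-B$. The parallelogram law then yields
\[
 |P_{[2m,2n)}(z)|^2+|P_{[2m,2n)}(-z)|^2
 =|A+B|^2+|A-B|^2
 =2|A|^2+2|B|^2.
\]
Since $|z|=1$ we have $|B|=|P_{[m,n)}(-z^2)|$, so the right-hand side equals $2\bigl(|P_{[m,n)}(z^2)|^2+|P_{[m,n)}(-z^2)|^2\bigr)$.

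Finally, taking the supremum over $|z|=1$ and using that the map $z\mapsto z^2$ is surjective on the unit circle, I conclude
\[
 \|P_{[2m,2n)}\|_L^2
 =\sup_{|z|=1}2\bigl(|P_{[m,n)}(z^2)|^2+|P_{[m,n)}(-z^2)|^2\bigr)
 =2\|P_{[m,n)}\|_L^2,
\]
and the second statement is the special case $m=0$. There is no real obstacle here; the main thing to notice is that the $\|\cdot\|_L$ norm was designed precisely so that the parallelogram identity collapses the cross terms $A\bar B$, which is why $\|\cdot\|_\infty$ would fail to scale in this clean way.
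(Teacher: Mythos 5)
Your proof is correct and matches the paper's argument essentially step for step: the even/odd split $P_{[2m,2n)}(z)=P_{[m,n)}(z^2)+zP_{[m,n)}(-z^2)$, the parallelogram identity, and taking the supremum over $|z|=1$ (with $m=0$ giving the second statement). Your explicit remark that $z\mapsto z^2$ is surjective on the unit circle is a small but welcome clarification of the final supremum step, which the paper leaves implicit.
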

\begin{proof}
By \eqref{e:sequence} (or Proposition~\ref{p:1}(b)) we have
\[
 P_{[2m,2n)}(z)=P_{[m,n)}(z^2)+z P_{[m,n)}(-z^2),
\]
and hence
\[
 P_{[2m,2n)}(-z)=P_{[m,n)}(z^2)-z P_{[m,n)}(-z^2).
\]
Thus by the paralellogram rule
\begin{align*}
 |P_{[2m,2n)}(z)|^2+|P_{[2m,2n)}(-z)|^2=2\big(|P_{[m,n)}(z^2)|^2+|P_{[m,n)}(-z^2)|^2\big).
\end{align*}
The first statement follows on taking supremums over $|z|=1$.
The second statement then follows by taking $m=0$.
\end{proof}

As an example, we see that
\begin{equation}\label{e:ptnorm}
 \|P_t\|_L=\|P_{<2^t}\|_L=2^{t/2}\|P_{<1}\|_L=2^{t/2}\|1\|_L=2^{t/2}\cdot\sqrt{2}=2^{(t+1)/2}.
\end{equation}
Clearly $\|\pm z^tP(\pm z^s)\|_L=\|P(z)\|_L$ for any $s\ne0$, so Proposition~\ref{p:1}(c) implies that
\begin{equation}\label{e:qtnorm}
 \|Q_t\|_L=\|\pm z^{2^t-1}P_t(-z^{-1})\|_L=\|P_t\|_L=2^{(t+1)/2}.
\end{equation}

As we clearly have $\|P\|_\infty\le\|P\|_L\le \sqrt{2}\|P\|_\infty$, we deduce from Lemma~\ref{l:scale}
that in general
\[
 \limsup_{k\to\infty}\frac{\|P_{[2^km,2^kn)}\|_\infty}{2^{k/2}} \le \|P_{[m,n)}\|_L,
\]
and a natural question is how much do these quantities differ. Indeed,
they are equal.

\begin{theorem}\label{t:dense}
\[
 \lim_{k\to\infty}\frac{\|P_{[2^km,2^kn)}\|_\infty}{2^{k/2}} = \|P_{[m,n)}\|_L.
\]
\end{theorem}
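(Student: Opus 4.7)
The upper bound $\limsup_{k\to\infty}\|P_{[2^km,2^kn)}\|_\infty/2^{k/2}\le\|P_{[m,n)}\|_L$ is the content of the inequality recorded immediately before the theorem, so the work lies entirely in the matching lower bound. My plan is to derive a closed-form expression for $P_{[2^km,2^kn)}$, use Cauchy--Schwarz to reduce the problem to a question about a Rudin--Shapiro pair evaluated at the many $2^k$th roots of a fixed point on the circle, and then invoke an equidistribution statement in the spirit of Rodgers~\cite{R17} to produce a $z$ that nearly saturates the inequality.

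Iterating the identity from the proof of Lemma~\ref{l:scale} (or verifying directly from \eqref{e:sequence} via the splitting $i=2^k q+r$) gives, for $k\ge 1$, the natural analogue of Proposition~\ref{p:1}(b) for the sub-sum:
\[
 P_{[2^km,\,2^kn)}(z) \;=\; P_{k-1}(z)\,P_{[m,n)}(z^{2^k}) \;+\; z^{2^{k-1}}Q_{k-1}(z)\,P_{[m,n)}(-z^{2^k}).
\]
Setting $w=z^{2^k}$, $u=P_{[m,n)}(w)$, $v=P_{[m,n)}(-w)$, Cauchy--Schwarz together with $|P_{k-1}(z)|^2+|Q_{k-1}(z)|^2=2^k$ yields
\[
 |P_{[2^km,2^kn)}(z)|^2 \;\le\; 2^k\bigl(|u|^2+|v|^2\bigr) \;\le\; 2^k\|P_{[m,n)}\|_L^2,
\]
with equality in the first step precisely when $\bigl(P_{k-1}(z),\,z^{2^{k-1}}Q_{k-1}(z)\bigr)$ is a scalar multiple of $(\bar u,\bar v)$. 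Thus, given $\eps>0$ and a fixed $w_0$ with $|u|^2+|v|^2\ge\|P_{[m,n)}\|_L^2-\eps$, it suffices to exhibit, for all large $k$, a point $z$ on the unit circle such that $z^{2^k}$ is close to $w_0$ (so by continuity $(P_{[m,n)}(z^{2^k}),P_{[m,n)}(-z^{2^k}))\approx (u,v)$) and the normalized vector $\bigl(P_{k-1}(z),\,z^{2^{k-1}}Q_{k-1}(z)\bigr)/2^{k/2}$ is close, modulo a common unimodular phase, to $(\bar u,\bar v)/\sqrt{|u|^2+|v|^2}$.

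The hard part is producing such a $z$. What is required is a joint equidistribution statement: as $z$ ranges over the unit circle and $k\to\infty$, the triple $\bigl(z^{2^k},\,P_{k-1}(z)/2^{k/2},\,z^{2^{k-1}}Q_{k-1}(z)/2^{k/2}\bigr)$ should converge in distribution to the product of the uniform measure on the circle and the limit measure on the unit sphere of $\C^2$ identified by Rodgers~\cite{R17}. The marginal on the Rudin--Shapiro pair is precisely the content of~\cite{R17}; the desired independence of the ``fast'' variable $z^{2^k}$ from that pair is heuristically natural because $z^{2^k}$ is a much higher-frequency character than those that drive Rodgers's analysis, and I expect it to follow from a quantitative character-sum refinement of his argument. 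Once the joint distribution is in hand, the full-support property of Rodgers's limit measure on the sphere (modulo the overall phase) supplies a $z$ with both desired properties simultaneously, and letting $\eps\to0$ completes the proof.
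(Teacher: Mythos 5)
Your overall strategy matches the paper's: decompose $P_{[2^km,2^kn)}$ via Proposition~\ref{p:1}(b), reduce the upper bound to Cauchy--Schwarz with the Parseval identity $|P_{k-1}|^2+|Q_{k-1}|^2=2^k$, and note that near-equality holds once the Rudin--Shapiro vector is nearly parallel to $(\bar u,\bar v)$ while $z^{2^k}$ is near a point $w_0$ maximizing $|u|^2+|v|^2$. However, the step you identify as ``the hard part'' is precisely where the proposal stops being a proof. Asserting that the triple $(z^{2^k},\,P_{k-1}(z)/2^{k/2},\,z^{2^{k-1}}Q_{k-1}(z)/2^{k/2})$ equidistributes to a product measure, and remarking that you ``expect it to follow from a quantitative character-sum refinement,'' names the desired conclusion but supplies no argument. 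This is not a minor technicality: proving that the fast variable $z^{2^k}$ decouples from the $U(2)$ walk is the entire content of the theorem, and the heuristic ``higher-frequency character'' intuition is not a proof.

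The paper resolves this cleanly by \emph{conditioning on the fiber} rather than proving a joint limit law. It picks $z$ achieving the supremum in $\|P_{[m,n)}\|_L$, then lets $w$ be uniform on the $2^k$ exact solutions of $w^{2^k}=z$. This measure satisfies $\E(w^n)=0$ whenever $2^k\ndv n$, and the key observation (Theorem~\ref{t:u2}) is that Rodgers's contraction argument already proves convergence to Haar measure on $U(2)$ for \emph{any} such measure: his linear map $S$ is shown to contract the full vector of Fourier coefficients $\bigl(p^{(k)}_{i,2^kj}\bigr)_{i,j}$, not merely the $j=0$ component, so only the Fourier-vanishing hypothesis is actually used. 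With that in hand, a positive-probability event produces a $w$ in the fiber with $(P_k(w),Q_k(w))/2^{(k+1)/2}$ arbitrarily close to any prescribed point of $S^3$, and the inner-product identity finishes. Your proposed joint-equidistribution statement would indeed follow from the same refinement (the $n\ne 0$, $2^k\mid n$ coefficients all tend to $0$), so your route could be completed; but as written, the key lemma is conjectured rather than established, and the paper's formulation -- conditioning on the fiber and checking the Fourier hypothesis -- is both the missing ingredient and the cleaner way to state it.
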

We defer the proof of this result (which is not needed in the proofs of Theorems \ref{t:onedim} or~\ref{t:twodim})
to Section~\ref{s:dense}.

Finally we note that it is easy to see that there exists a constant $C>0$ such that
\begin{equation}\label{e:cbound}
 \|P_{[m,n)}\|_L\le C\sqrt{n-m}
\end{equation}
for all $n\ge m\ge0$. Indeed, we may assume $n>m$ and pick a maximal $k$ such that $m\le 2^kr\le n$ for some
(necessarily odd) $r\in\N$. As $n<2^k(r+1)=2^{k+1}\frac{r+1}{2}$ we can write $n=2^kr+2^{t_1}+\dots+2^{t_p}$ with
$k>t_1>t_2>\dots>t_p$. Now note that, by
Proposition~\ref{p:1}(b), $P_{[2^kr,n)}$ can be decomposed into blocks of length $2^{t_i}$ each of which is (up
to multiplication by a power of~$z$) either $\pm P_{t_i}$ or $\pm Q_{t_i}$.
Thus by \eqref{e:ptnorm} and \eqref{e:qtnorm} we have
$\|P_{[2^kr,n)}\|_L\le\sum_i 2^{(t_i+1)/2}=O(2^{t_1/2})=O(\sqrt{n-m})$.
Similarly writing $m=2^kr-2^{s_1}-\dots-2^{s_q}$, $k>s_1>s_2>\dots>s_q$, we see that $P_{[m,2^kr)}$ can be decomposed
into blocks $\pm P_{s_i}$ or $\pm Q_{s_i}$ and $\|P_{[m,2^kr)}\|_L\le\sum_i 2^{(s_i+1)/2}=O(2^{s_1/2})=O(\sqrt{n-m})$.
The result then follows as $\|P_{[m,n)}\|_L\le \|P_{[m,2^kr)}\|_L+\|P_{[2^kr,n)}\|_L$.

\section{Proof of Theorem~\ref{t:onedim}}\label{s:onedim}

Define the function $f$ by
\[
 f(n)=\|P_{<n}\|_L^2
\]
for $n\in\N=\{0,1,2,\dots\}$. Lemma~\ref{l:scale} implies that $f(2n)=2f(n)$,
and so allows us to consistently extend this definition to
all non-negative dyadic rationals $x=\frac{n}{2^k}$ by defining
\begin{equation}\label{e:scale}
 f(x)=2^{-k}f(2^kx).
\end{equation}
Now the triangle inequality, the observation that $P_{<n}(z)=P_{<m}(z)+P_{[m,n)}(z)$,
and~\eqref{e:cbound}, imply that
\[
 |f(n)^{1/2}-f(m)^{1/2}|\le C\sqrt{n-m}.
\]
By \eqref{e:scale} this implies
\begin{equation}\label{e:fcont}
 |f(x)^{1/2}-f(y)^{1/2}|\le C\sqrt{y-x}
\end{equation}
for any dyadic rationals $y\ge x\ge0$, and hence
$f$ can be extended by continuity to a continuous function $f\colon[0,\infty)\to\R$
which satisfies
\begin{equation}\label{e:fdouble}
 f(2x)=2f(x)
\end{equation}
for all $x\ge0$.

A more refined version of the continuity statement \eqref{e:fcont}
can be given if $y$ is sufficiently close to a simple dyadic rational~$x$.

\begin{lemma}\label{l:fcont}
 If\/ $2^kx\in\N$ then
 \[
  |f(y)^{1/2}-f(x)^{1/2}|\le f(|y-x|)^{1/2}
 \]
 for all\/ $y\ge0$ with\/ $|y-x|\le 2^{-k-1}$.
\end{lemma}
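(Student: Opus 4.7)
The plan is to first reduce the statement to an inequality about integer-indexed polynomials via the scaling relation \eqref{e:fdouble} and continuity of $f$, and then to prove an exact identity between the relevant $\|\cdot\|_L$-norms using the recursive block structure of the Rudin--Shapiro sequence together with Proposition~\ref{p:1}(c).

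First I would use \eqref{e:fdouble}: since both $|f(y)^{1/2} - f(x)^{1/2}|$ and $f(|y-x|)^{1/2}$ pick up the same factor $2^{k/2}$ under $t \mapsto 2^k t$, it suffices to prove the $k = 0$ case, that is, $x = m \in \N$ and $|y - m| \le 1/2$. By the continuity estimate \eqref{e:fcont} applied to $y \mapsto f(y)^{1/2}$ and $y \mapsto f(|y-x|)^{1/2}$, it is enough to treat dyadic $y$. Writing $y = n/2^j$ and rescaling by $2^j$, the claim becomes $\bigl|\|P_{<N}\|_L - \|P_{<M}\|_L\bigr| \le \|P_{<|N-M|}\|_L$ where $M = 2^j m$ is a nonnegative multiple of $2^j$, $N \in \N$, and $|N - M| \le 2^{j-1}$. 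The triangle inequality for $\|\cdot\|_L$ bounds the left side by $\|P_{[\min(M,N),\max(M,N))}\|_L$, so the task reduces to the equality
\[
\|P_{[\min(M,N),\max(M,N))}\|_L = \|P_{<|N-M|}\|_L,
\]
which I would establish separately in the two cases $N \ge M$ and $N < M$.

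When $N \ge M$, set $d = N - M \le 2^{j-1}$ and $M = 2^j s$. For $0 \le i < d$, the binary expansion of $M + i$ places the bits of $s$ in positions $\ge j$, the bits of $i$ in positions $0, \dots, j-2$, and a zero at position $j - 1$ (the latter because $i < 2^{j-1}$). This buffer zero forbids any $11$-pattern from crossing the high/low boundary, so the product formula $a_n = (-1)^{\sum_\ell b_\ell b_{\ell+1}}$ factors as $a_{M+i} = a_s a_i$. Consequently $P_{[M,N)}(z) = a_s z^M P_{<d}(z)$, and the desired equality follows from the invariance $\|\pm z^a P(z)\|_L = \|P(z)\|_L$ noted in the text.

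The case $N < M$ is the main obstacle. Setting $d = M - N \le 2^{j-1}$ (so $s \ge 1$), the block decomposition stated after Proposition~\ref{p:1} gives $P_{[(s-1)2^j, s\cdot 2^j)}(z) = a_{s-1} z^{(s-1)2^j} R(z)$ for some $R \in \{P_j, Q_j\}$, whence $P_{[N,M)}(z)$ equals $a_{s-1} z^{(s-1)2^j}$ times the last $d$ coefficients of $R$. These last-$d$ truncations are \emph{not} equal to $P_{<d}$, so one needs an extra identity to recover the correct $\|\cdot\|_L$-norm. For $R = Q_j$, substituting $q_k = (-1)^j a_{2^j - 1 - k}(-1)^{2^j - 1 - k}$ (from Proposition~\ref{p:1}(c)) into $\sum_{k = 2^j - d}^{2^j - 1} q_k z^k$ and re-indexing $k = 2^j - 1 - i$ identifies the tail with $(-1)^j z^{2^j - 1} P_{<d}(-z^{-1})$. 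Since $\|\cdot\|_L$ is unchanged both by multiplication by $\pm z^a$ and by the substitution $z \mapsto -z^{-1}$ (the latter merely permuting the pair $\{w, -w\}$ summed in the definition of the $L$-norm), this has $\|\cdot\|_L$-norm $\|P_{<d}\|_L$. For $R = P_j$, the recurrence $P_j = P_{j-1} + z^{2^{j-1}} Q_{j-1}$ together with $d \le 2^{j-1}$ shows the last $d$ coefficients of $P_j$ are those of $z^{2^{j-1}} Q_{j-1}$, reducing to the previous subcase. Combining both cases completes the proof.
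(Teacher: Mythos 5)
Your proof is correct and follows essentially the same route as the paper's: reduce by scaling and continuity to integers in the range $[M, M\pm d]$ with $M$ a multiple of $2^j$ and $d\le 2^{j-1}$, apply the triangle inequality for $\|\cdot\|_L$, and then identify $\|P_{[M,M+d)}\|_L$ and $\|P_{[M-d,M)}\|_L$ with $\|P_{<d}\|_L$ using the block structure for the ``forward'' piece and Proposition~\ref{p:1}(c) for the ``backward'' piece. The only cosmetic difference is that for the forward case you invoke the explicit binary formula $a_n=(-1)^{\sum b_ib_{i+1}}$ to get $a_{M+i}=a_sa_i$, whereas the paper cites the block decomposition from Proposition~\ref{p:1}(b) (first $d\le 2^{j-1}$ terms of $P_j$ or $Q_j$ agree with $P_{<d}$); these are two ways of saying the same thing.
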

\begin{proof}
It is enough by continuity to prove this for any dyadic rational~$y$, so pick a $t\in\N$ such that
$2^{k+t}y$ is an integer. Writing $n=2^kx$ and $r=2^{k+t}|y-x|$, we
have $r\le 2^{t-1}$ and $2^{k+t}y=2^tn\pm r$. Now
\[
 P_{<2^tn+r}(z)=P_{<2^tn}(z)\pm z^{2^tn}P_{<r}(z)
\]
and also
\[
 P_{<2^tn-r}(z)=P_{<2^tn}(z)\pm z^{2^tn-1}P_{<r}(-z^{-1}).
\]
Indeed, these follow from Proposition~\ref{p:1}(b) as $P_\infty(z)$
can be decomposed into blocks of the form $\pm z^{2^tm}P_t(z)$ when $m$ is even
and $\pm z^{2^tm}Q_t(z)$ when $m$ is odd. The first equality then
follows as the first $r\le 2^{t-1}$ terms of either $P_t$ or $Q_t$ forms a $P_{<r}$.
The second equality follows from Proposition~\ref{p:1}(c) which implies the
last $r$ terms of $P_t$ or $Q_t$ forms a $\pm z^{2^t-1}P_{<r}(-z^{-1})$.

The triangle inequality now implies that
\[
 \big|\|P_{<2^{k+t}y}\|_L-\|P_{<2^{k+t}x}\|_L\big|\le \|P_{<2^{k+t}|y-x|}\|_L,
\]
from which we deduce from \eqref{e:scale} that $|f(y)^{1/2}-f(x)^{1/2}|\le f(|x-y|)^{1/2}$.
\end{proof}

\begin{figure}
\centerline{\includegraphics[width=2.5in]{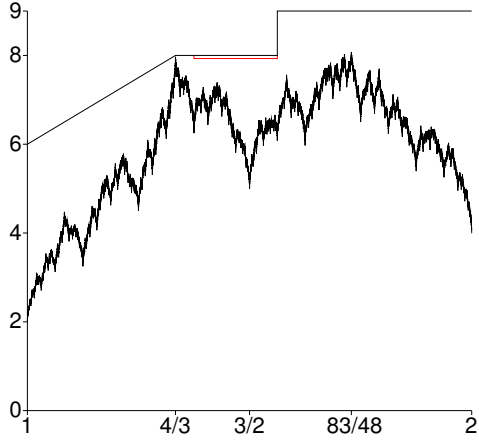}}
\caption{Graph of $f(x)$ with bounds proven in Theorem~\ref{t:fbound} (black) and \eqref{e:fstrong} (red).
Note that $f(\frac{83}{48})>8$, so we are unable to prove a bound $f(x)\le 8$
on the whole interval $[\frac{4}{3},2]$.}\label{f:1}
\end{figure}

We now prove a slightly weaker version of Theorem~\ref{t:onedim}, which is nevertheless enough
to imply $\|P_{<n}\|_\infty\le \sqrt{6n}$.

\begin{theorem}\label{t:fbound}
 We have the bounds
 \begin{equation}\label{e:fbound}
  f(x)\le
  \begin{cases}
   6x,&\text{if }x\in[1,\frac{4}{3}];\\
   8,&\text{if }x\in[\frac{4}{3},\frac{25}{16}];\\
   9,&\text{if }x\in[\frac{25}{16},2].
  \end{cases}
 \end{equation}
 In particular, $f(x)\le 6x$ for all $x\ge0$.
\end{theorem}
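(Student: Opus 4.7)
I first reduce the unrestricted statement $f(x) \le 6x$ to the three piecewise bounds on $[1, 2]$: each piecewise bound implies $f(x) \le 6x$ on $[1, 2]$ (since $8 = 6 \cdot 4/3$ and $9 < 6 \cdot 3/2 \le 6x$ for $x \in [25/16, 2]$), and iterating the scaling identity $f(2x) = 2f(x)$, together with continuity, extends the bound to all $x \ge 0$.

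For the three piecewise bounds themselves, I would employ a maximum-principle argument. Let $\phi\colon [0, \infty) \to [0, \infty)$ denote the piecewise upper bound extended to $[0, \infty)$ by $\phi(2x) = 2\phi(x)$, and set $r := \sup_{x > 0} f(x)/\phi(x)$. Because $f/\phi$ is scale-invariant and continuous on $(0, \infty)$, the supremum is attained at some $y^* \in [1, 2]$. If $r > 1$, applying Lemma~\ref{l:fcont} at a simple dyadic base point $x_0$ with $|y^* - x_0| \le 2^{-k-1}$ and substituting $f \le r\phi$ into $\sqrt{f(y^*)} \le \sqrt{f(x_0)} + \sqrt{f(|y^* - x_0|)}$ yields
\[
 \sqrt{r}\bigl(\sqrt{\phi(y^*)} - \sqrt{\phi(|y^* - x_0|)}\bigr) \le \sqrt{f(x_0)},
\]
forcing $r \le 1$ whenever the consistency inequality $\sqrt{f(x_0)} + \sqrt{\phi(|y^* - x_0|)} \le \sqrt{\phi(y^*)}$ holds. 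The key base values are $f(1) = \|1\|_L^2 = 2$, $f(3/2) = f(3)/2 = 5$ (from $P_{<3}(z) = 1+z+z^2$, giving $|P_{<3}(z)|^2 + |P_{<3}(-z)|^2 = 6 + 4\cos 2\theta$ with supremum $10$), and $f(2) = \|1+z\|_L^2 = 4$. For $y^* \in [1, 4/3]$ I would take $x_0 = 1$: the check becomes $\sqrt{2} + \sqrt{\phi(y^*-1)} \le \sqrt{6y^*}$, saturated precisely at $y^* = 4/3$ where $\phi(1/3) = 2$. Analogous applications at $x_0 = 2$ and $x_0 = 3/2$ (possibly supplemented by finer dyadic points with explicitly computable $f$-values) should cover the remainder of $[1, 2]$.

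The main obstacle is the transition region near $y^* \approx 3/2$ and throughout $[4/3, 25/16]$: the check at $x_0 = 1$ breaks down once $y^* - 1 > 25/64$ because $\phi$ jumps from $2$ to $9/4$ there, while the check at $x_0 = 3/2$ fails for small separations since $\phi(|y^*-3/2|) > (\sqrt{8}-\sqrt{5})^2 \approx 0.35$. Closing the argument here will require introducing additional dyadic base points (e.g.\ $7/4$, $11/8$, $25/16$ itself) with exactly computed $f$-values, together with a careful case analysis. The precise breakpoint $25/16$ in the statement is presumably dictated by where the consistency inequality transitions between the $\phi = 8$ and $\phi = 9$ regimes, consistent with the Figure~\ref{f:1} remark that $f(83/48) > 8$ forbids a single flat bound $\phi \equiv 8$ on all of $[4/3, 2]$.
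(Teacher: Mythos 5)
Your framework is essentially the same as the paper's: reduce by scaling to $[1,2]$, then use Lemma~\ref{l:fcont} at chosen base points.  The paper phrases this as an induction on the integer $n=2^kx$ rather than as a maximum principle on the ratio $f/\phi$, but these are two ways of organizing the same estimate.  Your handling of $x\in[1,\tfrac43]$ with base point $x_0=1$ (saturating at $x=\tfrac43$ because $\sqrt{2}+\sqrt{2}=\sqrt{8}$) matches the paper exactly, and the paper's treatment of $[\tfrac43,\tfrac{11}{8}]$ again uses $x_0=1$, recovering $f(y)\le2$ from the scaled bound $f(4y)\le8$, exactly as you describe.

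Where you have a genuine gap is the region $[\tfrac{11}{8},2]$, and you have underestimated its severity.  You propose closing it with a few additional base points such as $\tfrac74$, $\tfrac{11}{8}$, $\tfrac{25}{16}$ and ``exactly computed $f$-values.''  This is not realistic.  The paper covers $[\tfrac{11}{8},\tfrac{25}{16}]$ with five base points (Table~\ref{t:1}) and $[\tfrac{25}{16},2]$ with six (Table~\ref{t:2}); several of these are dyadic rationals with denominator $32$ or $64$ (e.g.\ $\tfrac{45}{32}$, $\tfrac{91}{64}$, $\tfrac{55}{32}$), whose $f$-values are sup-norms of degree-$\sim90$ polynomials and are evaluated numerically on $2^{24}$ roots of unity to a certified accuracy of $10^{-6}$.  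Moreover the paper must prove the strictly sharper bound $f(x)\le 7.92$ on $[\tfrac{11}{8},\tfrac{25}{16}]$ (equation~\eqref{e:fstrong}), because the weaker $f\le8$ would not suffice in the final step of the proof of Theorem~\ref{t:onedim}; your plan does not anticipate this.  So this part of the argument is inherently computer-assisted, not a ``careful case analysis'' in the hand-computation sense.  There is also a small technical wrinkle in the maximum-principle phrasing: the comparison function $\phi$ is discontinuous (it jumps from $8$ to $9$ at $x=\tfrac{25}{16}$ and at powers of two), so $f/\phi$ is not continuous as you assert; you would need to take $\phi$ lower semicontinuous and argue via upper semicontinuity of the ratio to guarantee the supremum is attained.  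None of this invalidates the framework, but as written the proposal does not constitute a proof of the theorem on $[\tfrac{11}{8},2]$.
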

\begin{proof}
It is enough by continuity to prove these inequalities for a dyadic rational,
and hence it is enough to prove the appropriately scaled inequalities
for integers $n=2^kx$. We prove the result by induction on~$n$.
Clearly $f(0)=\|0\|_L^2=0$ and $f(1)=\|1\|_L^2=2$ satisfy these conditions.

First suppose $1\le x\le\frac{4}{3}$. Write $x=1+y$ so that $n=2^kx=2^k+r$, $r=2^ky<2^{k-1}$.
Then by induction $f(r)\le 6r$ and so $f(y)\le 6y$. Now, by Lemma~\ref{l:fcont},
\[
 f(x)\le \big(f(1)^{1/2}+f(y)^{1/2}\big)^2 \le \big(\sqrt{2}+\sqrt{6y}\big)^2=2+4\sqrt{3y}+6y\le 6+6y=6x
\]
for all $y\le\frac{1}{3}$.

Now suppose $\frac{4}{3}\le x\le \frac{11}{8}$. Again write $x=1+y$ so that $n=2^k+r$,
$r=2^ky<2^{k-1}$. Now $4y\in [\frac{4}{3},\frac{3}{2}]$, so by induction ($r<n$),
$f(y)=\frac{1}{4}f(4y)\le\frac{1}{4}\cdot 8=2$.
Hence
\[
 f(x)\le \big(f(1)^{1/2}+f(y)^{1/2}\big)^2\le\big(\sqrt{2}+\sqrt{2}\big)^2=8,
\]
as required.

\begin{table}
\[\begin{array}{llll}
x\text{ (binary)}&x\text{ (decimal)}&f(x)&\text{Interval covered}\\
1.011     &1.375000&6.250000&[1.358355,1.391645]^1\\
1.01101   &1.406250&6.491173&[1.390625,1.421875]^*\\
1.011011  &1.421875&6.955324&[1.415772,1.427978]^2\\
1.0111    &1.437500&6.625000&[1.427730,1.447270]^1\\
1.1       &1.500000&5.000000&[1.437500,1.562500]^3\\
\end{array}\]
\caption{Values of $f(x)$ used to bound $f(x)\le 7.92$ in $[1.375,1.5625]=[\frac{11}{8},\frac{25}{16}]$
along with the intervals where bound is proven. The index $i$ on the interval indicates
that the range $[x-r,x+r]$ was limited in this case by a bound on $f(r)$ corresponding to a scaled version
of case $i$ in~\eqref{e:fbound}. A star on the interval indicates $r$ was limited by
the restriction $|y-x|\le r=2^{-k-1}$ in Lemma~\ref{l:fcont}.}\label{t:1}
\end{table}

\begin{table}
\[\begin{array}{llll}
x\text{ (binary)}&x\text{ (decimal)}&f(x)&\text{Interval covered}\\
1.101     &1.625000&5.971801&[1.562500,1.687500]^*\\
1.1011    &1.687500&7.090947&[1.668559,1.706441]^1\\
1.10111   &1.718750&7.284252&[1.703125,1.734375]^*\\
1.11      &1.750000&6.500000&[1.716177,1.783823]^1\\
1.1101    &1.812500&6.239011&[1.781250,1.843750]^*\\
10.       &2.000000&4.000000&[1.833334,2.166666]^1\\
\end{array}\]
\caption{Values of $f(x)$ used to bound $f(x)\le 9$ in $[1.5625,2]=[\frac{25}{16},2]$
along with the intervals where bound is proven.
The indices $i$ on the intervals are as in Table~\ref{t:1}.}\label{t:2}
\end{table}

It remains to prove the theorem in the case when $x\in[\frac{11}{8},2]$, the last statement then following from
the fact that $f(x)\le 6x$ for all $x\in[1,2]$, and $f(2x)=2f(x)$ for all $x\ge0$. In fact, it will help
in the proof of Theorem~\ref{t:onedim} to prove the very slightly stronger bound
\begin{equation}\label{e:fstrong}
 f(x)\le 7.92\qquad\text{if }x\in[\tfrac{11}{8},\tfrac{25}{16}].
\end{equation}
The inequalities \eqref{e:fbound} and \eqref{e:fstrong} however are never equalities on $[\frac{11}{8},2]$
(see Figure~\ref{f:1} for a plot of $f(x)$).
As $f(x)$ can be readily calculated by computer, Lemma~\ref{l:fcont}
allows us to provide a computer assisted proof on an interval around any dyadic point. The result will
then follow by exhibiting a collection of such intervals that cover $[\frac{11}{8},2]$.

More specifically, we use the values of $x$ in Table~\ref{t:1} to show that $f(y)\le 7.92$
for all $y\in[\frac{11}{8},\frac{25}{16}]$, and the values of $x$ in Table~\ref{t:2} to show that $f(y)\le 9$
for all $y\in[\frac{25}{16},2]$. In each case we use Lemma~\ref{l:fcont} to bound $f(y)$
in an interval $[x-r,x+r]$ around $x$ using induction and \eqref{e:fbound} (scaled appropriately using
$f(|y-x|)=2^{-t}f(2^t|y-x|)$ with $2^t|y-x|\in[1,2]$) to bound $f(|y-x|)$ for $|y-x|\le r$.

Computer calculations of $f(x)$ were performed by evaluating $P_{<n}(z)$ for $n=2^kx$ on all $2^{24}$th
roots of unity. The maximum error bound in $f(x)$ being easily seen to be less that
$10^{-6}$ in all cases (e.g., by the argument on page 551 of~\cite{M17}).
\end{proof}

\begin{proof}[Proof of Theorem~\ref{t:onedim}]
Write $n_k=\frac{4}{3}\cdot 2^k+\frac{1}{3}$ and note that $n_k$ is only an integer
when $k$ is odd, and that $\sqrt{6n_k-2}-1=2^{(k+3)/2}-1$.

We shall prove by induction on $k$ that
\begin{align}
 \|P_{<n}\|_\infty&\le \sqrt{6n-2}-1,&\rlap{for $1\le n\le 2^{k+1}$; and}\qquad\qquad\qquad\qquad\qquad\label{e:nbound1}\\
 \|P_{<n}\|_\infty&\le 2^{(k+3)/2}-1,&\rlap{for $n_k\le n\le\tfrac{25}{16}\cdot 2^k$;}\qquad\qquad\qquad\qquad\qquad\label{e:nbound2}
\end{align}
where we note that \eqref{e:nbound2} implies \eqref{e:nbound1} for $n_k\le n\le\tfrac{25}{16}\cdot 2^k$.
It is easy to see that \eqref{e:nbound1} and \eqref{e:nbound2} hold for $k=0,1$, so assume $k\ge2$.

Suppose $2^k<n<n_k$ and write $n=2^k+r$, where $r<n_k-2^k=n_{k-2}$.
As $0<r\le 2^{k-1}$ we have $P_{<n}(z)=P_k(z)+z^{2^k}P_{<r}(z)$ and, by induction,
\[
 \|P_{<n}\|_\infty\le \|P_k\|_\infty+\|P_{<r}\|_\infty
  \le 2^{(k+1)/2}+\sqrt{6r-2}-1.
\]
Now $r\le n_{k-2}$ implies $\sqrt{6r-2}\le 2^{(k-1)/2}$, so
\begin{align*}
 (2^{(k+1)/2}+\sqrt{6r-2})^2
 &=2^{k+1}+6r-2+2^{(k+3)/2}\sqrt{6r-2}\\
 &\le 2^{k+1}+6r-2 + 2^{(k+3)/2}\cdot 2^{(k-1)/2}\\
 &= 6(2^k+r)-2=6n-2.
\end{align*}
Hence $\|P_{<n}\|_\infty\le \sqrt{6n-2}-1$, as required.

Now suppose $n_k\le n<\frac{11}{8}\cdot 2^k$ and write $n=2^k+r$ with
$n_{k-2}\le r\le\frac{3}{2}\cdot 2^{k-2}<\frac{25}{16}\cdot 2^{k-2}$.
Again we have $P_{<n}(z)=P_k(z)+z^{2^k}P_{<r}(z)$ and, by induction,
\[
 \|P_{<n}\|_\infty\le \|P_k\|_\infty+\|P_{<r}\|_\infty\le 2^{(k+1)/2}+2^{(k+1)/2}-1=2^{(k+3)/2}-1,
\]
as required.

Now for $\frac{11}{8}\cdot 2^k\le n\le \frac{25}{16}\cdot 2^k$ we simply use~\eqref{e:fstrong} to obtain
\[
 \|P_{<n}\|_\infty\le \|P_{<n}\|_L\le \sqrt{7.92}\cdot 2^{k/2}<2^{(k+3)/2}-1,
\]
where the last inequality holds for $k\ge 13$. For $k\le 12$
computer calculations show directly that $\|P_{<n}\|_L<2^{(k+3)/2}-1$ for this range of~$n$.

Finally, for $\frac{25}{16}\cdot 2^k\le n\le 2^{k+1}$ we have
\[
 \|P_{<n}\|_\infty\le \|P_{<n}\|_L\le 3\cdot2^{k/2}< \sqrt{6\cdot \tfrac{25}{16}\cdot 2^k-2}-1\le \sqrt{6n-2}-1
\]
for $k\ge7$, and for $k\le 6$ computer calculations show directly that $\|P_{<n}\|_L<\sqrt{6n-2}-1$
for this range of~$n$.
\end{proof}

\section{Proof of Theorem~\ref{t:twodim}}\label{s:twodim}

We can define, in analogy to $f(x)$ above, the function
\[
 f(m,n):=\|P_{[m,n)}\|_L^2
\]
and extend by Lemma~\ref{l:scale} and then by continuity to a continuous function $f(x,y)$ defined
for all $0\le x\le y$, $x,y\in\R$, that satisfies
\[
 f(2x,2y)=2f(x,y).
\]

Again the strategy is to use a computer to check most of the parameter space $(x,y)$, which by scaling
and translating can be assumed to be $[0,2]\times[2,4]$. The main difficulty is that the $P_{[m,n)}$
corresponding to $(x,y)$ near the extremal point $(\frac{5}{3},\frac{8}{3})$ does not exhibit such a simple
decomposition as before. Thus we will need to deal with a more complicated version of our $\|\cdot\|_L$ norm.

Define the following function for any $r,s\in\N$,
\[
 g(r,s)=\sup_{|\alpha|=1}\|P_{<s}(z)+\alpha z^{-1} P_{<r}(-z^{-1})\|_L^2.
\]

\begin{proposition}\label{p:gbasic}
 The function\/ $g$ satisfies the following properties.
 \begin{itemize}
 \item[$($a$)$] For all\/ $r,s\ge0$, $g(s,r)=g(r,s)$.
 \item[$($b$)$] For all\/ $r,s\ge0$, $g(2r,2s)=2g(r,s)$.
 \item[$($c$)$] There exists a constant\/ $C>0$ such that for all\/ $r,s,r',s'\ge0$,
 \[
  \big|g(r,s)^{1/2}-g(r',s')^{1/2}\big|\le C|r-r'|^{1/2}+C|s-s'|^{1/2}.
 \]
 \end{itemize}
\end{proposition}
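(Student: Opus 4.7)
For (a), the observation is that two operations both preserve the $L$-norm: multiplication by a unit scalar (trivially), and the substitution $H(z)\mapsto -z^{-1}H(-z^{-1})$ (since for $|z|=1$ the pair $(|H(-z^{-1})|,|H(z^{-1})|)$ equals $(|H(w)|,|H(-w)|)$ after the bijective change of variable $w=-z^{-1}$ on the unit circle, so the sup in the definition of $\|\cdot\|_L$ is unchanged). Multiplying $F_{r,s,\alpha}(z):=P_{<s}(z)+\alpha z^{-1}P_{<r}(-z^{-1})$ by $\alpha^{-1}$ and then applying this substitution produces $P_{<r}(z)+(-\alpha^{-1})z^{-1}P_{<s}(-z^{-1})=F_{s,r,\beta}(z)$ with $\beta=-\alpha^{-1}$. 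Since $\alpha\mapsto\beta$ is a bijection of the unit circle, part (a) follows by taking the supremum.

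For (b), I mimic the proof of Lemma~\ref{l:scale}. The identity $P_{<2n}(z)=P_{<n}(z^2)+zP_{<n}(-z^2)$ from that lemma, applied once directly (with $n=s$) and once with $z$ replaced by $-z^{-1}$ (with $n=r$), gives
\[
 P_{<2s}(z)=P_{<s}(z^2)+zP_{<s}(-z^2),\qquad P_{<2r}(-z^{-1})=P_{<r}(z^{-2})-z^{-1}P_{<r}(-z^{-2}).
\]
Substituting into $F(z):=P_{<2s}(z)+\alpha z^{-1}P_{<2r}(-z^{-1})$ and sorting by parity in $z$ yields $F(z)=A(z)+B(z)$ with $A$ containing only even powers and $B$ only odd, so that $|F(z)|^2+|F(-z)|^2=2|A(z)|^2+2|B(z)|^2$ by the parallelogram identity, exactly as in Lemma~\ref{l:scale}. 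A short rearrangement---using $z^{-2}=-v^{-1}$ where $v=-z^2$---shows that on $|z|=1$ we have $|A(z)|=|G_\beta(z^2)|$ and $|B(z)|=|G_\beta(-z^2)|$, where
\[
 G_\beta(w):=P_{<s}(w)+\beta w^{-1}P_{<r}(-w^{-1}),\qquad \beta:=-\alpha.
\]
Taking the supremum over $|z|=1$ (with $w=z^2$ still sweeping the whole unit circle) gives $\|F\|_L^2=2\|G_\beta\|_L^2$, and then supping over $|\alpha|=1$ (equivalently $|\beta|=1$) yields $g(2r,2s)=2g(r,s)$.

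For (c), the triangle inequality for $\|\cdot\|_L$ applied to $F_{r,s,\alpha}$ and $F_{r',s',\alpha}$ gives
\[
 \bigl|\|F_{r,s,\alpha}\|_L-\|F_{r',s',\alpha}\|_L\bigr|\le\|P_{<s}-P_{<s'}\|_L+\bigl\|\alpha z^{-1}\bigl(P_{<r}(-z^{-1})-P_{<r'}(-z^{-1})\bigr)\bigr\|_L.
\]
The invariances from (a) reduce the second term to $\|P_{<r}-P_{<r'}\|_L$. Writing $P_{<s}-P_{<s'}$ as $\pm P_{[\min(s,s'),\max(s,s'))}$ and invoking \eqref{e:cbound} bounds both right-hand terms by $C(|s-s'|^{1/2}+|r-r'|^{1/2})$, uniformly in $\alpha$; (c) then follows on taking $\sup_\alpha$ and using $|\sup_\alpha a_\alpha-\sup_\alpha b_\alpha|\le\sup_\alpha|a_\alpha-b_\alpha|$.

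The only substantive step is the rearrangement in (b), where one must track the parameter $\alpha$ carefully through the even/odd decomposition in order to recognise both blocks $A$ and $B$ as the same function $G_\beta$ evaluated at $\pm z^2$; parts (a) and (c) are essentially formal consequences of the invariances $\|\alpha z^t H(\pm z^{\pm 1})\|_L=\|H\|_L$ together with the linear-growth bound \eqref{e:cbound}.
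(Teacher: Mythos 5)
Your proof is correct and follows essentially the same approach as the paper: part (a) via the substitution $z\mapsto -z^{-1}$, $\alpha\mapsto -\alpha^{-1}$; part (b) via the same even/odd decomposition of $P_{<2s}(z)+\alpha z^{-1}P_{<2r}(-z^{-1})$ together with the parallelogram identity; part (c) via the triangle inequality for $\|\cdot\|_L$ together with \eqref{e:cbound}. A small merit of your phrasing of (b) is that by explicitly identifying both parity blocks as $G_\beta(\pm z^2)$ with the \emph{same} $\beta=-\alpha$, and observing that $z\mapsto z^2$ and $\alpha\mapsto\beta$ are surjective on the circle, you establish the equality $g(2r,2s)=2g(r,s)$ cleanly; the paper's write-up of (b) closes with a chain of inequalities that, read literally, directly gives only $g(2r,2s)\le 2g(r,s)$ before the final ``take supremums'' sentence, whereas your version makes the recovery of the reverse inequality explicit.
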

\begin{proof}
The first part follows immediately by simply substituting $z\mapsto -z^{-1}$ and $\alpha\mapsto -\alpha^{-1}$
in the definition of $g(s,r)$. For the second part we note by Proposition~\ref{p:1}(b) that
\[
 P_{<2s}(z)+\alpha z^{-1} P_{<2r}(-z^{-1})=P_{<s}(z^2)+zP_{<s}(-z^2)+\alpha z^{-1} P_{<r}(z^{-2})-\alpha z^{-2}P_{<r}(-z^{-2}),
\]
and hence
\[
 P_{<2s}(-z)-\alpha z^{-1} P_{<2r}(z^{-1})=P_{<s}(z^2)-zP_{<s}(-z^2)-\alpha z^{-1} P_{<r}(z^{-2})-\alpha z^{-2}P_{<r}(-z^{-2}).
\]
Thus by the parallelogram rule
\begin{align*}
 \big|P_{<2s}(z)+\alpha z^{-1} &P_{<2r}(-z^{-1})\big|^2+\big|P_{<2s}(-z)-\alpha z^{-1} P_{<2r}(z^{-1})\big|^2\\
 &=2\big|P_{<s}(z^2)-\alpha z^{-2}P_{<r}(-z^{-2})\big|^2+2\big|zP_{<s}(-z^2)+\alpha z^{-1} P_{<r}(z^{-2})\big|^2\\
 &=2\big|P_{<s}(z^2)-\alpha z^{-2}P_{<r}(-z^{-2})\big|^2+2\big|P_{<s}(-z^2)+\alpha z^{-2} P_{<r}(z^{-2})\big|^2\\
 &\le2\|P_{<s}(z^2)-\alpha z^{-2}P_{<r}(-z^{-2})\|_L^2\\
 &\le 2g(r,s).
\end{align*}
The result now follows by taking the supremum over $z$ and~$\alpha$.

The last statement is immediate from the triangle inequality for $\|\cdot\|_L$ together with~\eqref{e:cbound}.
\end{proof}

As with the function~$f$, we can now extend the definition of $g$ to non-negative dyadic rationals by setting
\begin{equation}\label{e:gscale}
 g(x,y)=2^{-k}g(2^kx,2^ky),
\end{equation}
where $2^kx,2^ky\in\N$, and then extend the definition of $g$ by continuity (Proposition~\ref{p:gbasic}(c))
to all real $x,y\ge0$. The following shows that we can use the function $g$ to bound the function $f$ in a (rather large)
neighborhood of the critical point $(\frac{5}{3},\frac{8}{3})$.

\begin{lemma}\label{l:compfg}
 If\/ $0\le x,y\le1$ then
 \[
  f(2-x,2+y)\le g(x,y).
 \]
\end{lemma}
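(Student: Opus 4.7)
The plan is to reduce to integer arguments via the scaling $f(2x,2y)=2f(x,y)$ and $g(2x,2y)=2g(x,y)$ together with continuity in both variables. It suffices to show that for every $k\ge0$ and every pair of integers $0\le r,s\le 2^k$,
\[
 f(2^{k+1}-r,\,2^{k+1}+s)\le g(r,s);
\]
choosing $k$ so that $2^kx$ and $2^ky$ are integers then yields the result on dyadic rationals, and continuity finishes the job.

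The heart of the argument is to recognize $P_{[2^{k+1}-r,\,2^{k+1}+s)}$ as one of the polynomials whose $L$-norm appears in $g(r,s)$. By the block decomposition from Proposition~\ref{p:1}(b), the index window $[2^{k+1}-r,\,2^{k+1}+s)$ straddles position $2^{k+1}$, with the $n=0$ block of length $2^{k+1}$ equal to $P_{k+1}(z)$ and the $n=1$ block equal to $z^{2^{k+1}}Q_{k+1}(z)$. Therefore
\[
 P_{[2^{k+1}-r,\,2^{k+1}+s)}(z)=L_r(z)+z^{2^{k+1}}R_s(z),
\]
where $L_r(z)$ collects the last $r$ terms of $P_{k+1}(z)$ and $R_s(z)$ collects the first $s$ terms of $Q_{k+1}(z)$.

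Since $s\le 2^k$ and $Q_{k+1}(z)=P_k(z)-z^{2^k}Q_k(z)$, the first $s$ coefficients of $Q_{k+1}$ coincide with those of $P_k$, so $R_s(z)=P_{<s}(z)$. For $L_r$, since $r\le 2^k$ and $P_{k+1}(z)=P_k(z)+z^{2^k}Q_k(z)$, the last $r$ coefficients lie in the $z^{2^k}Q_k(z)$ block. Applying Proposition~\ref{p:1}(c) in the form $Q_k(z)=(-1)^kz^{2^k-1}P_k(-z^{-1})$ to reverse coefficients, a short calculation gives
\[
 L_r(z)=(-1)^kz^{2^{k+1}-1}P_{<r}(-z^{-1}).
\]
Combining, $P_{[2^{k+1}-r,\,2^{k+1}+s)}(z)=z^{2^{k+1}}\bigl(P_{<s}(z)+(-1)^kz^{-1}P_{<r}(-z^{-1})\bigr)$, and since $|z^{2^{k+1}}|=1$ on the unit circle, taking $L$-norms yields
\[
 f(2^{k+1}-r,\,2^{k+1}+s)=\bigl\|P_{<s}(z)+(-1)^kz^{-1}P_{<r}(-z^{-1})\bigr\|_L^2\le g(r,s),
\]
the last inequality because $\alpha=(-1)^k$ has $|\alpha|=1$ and so is an admissible choice in the supremum defining $g$.

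The only piece that takes care is the identity for $L_r$: one must track the sign $(-1)^k$ and the shift $z^{2^{k+1}-1}$ when reversing the last $r$ coefficients of $Q_k$ using Proposition~\ref{p:1}(c). Everything else is a straightforward use of the block structure and of the scaling/continuity properties established for $f$ and $g$.
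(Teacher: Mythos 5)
Your proposal is correct and follows essentially the same route as the paper: reduce to integer indices by scaling and continuity, then decompose $P_{[2^{k+1}-r,\,2^{k+1}+s)}$ into a left piece (last $r$ terms of $P_{k+1}$, rewritten via Proposition~\ref{p:1}(c) as $\pm z^{2^{k+1}-1}P_{<r}(-z^{-1})$) and a right piece $z^{2^{k+1}}P_{<s}(z)$, so that the $L$-norm is exactly one of the quantities appearing in the supremum defining $g$. The only cosmetic difference is that you pin down the sign as $(-1)^k$, whereas the paper leaves it as $\pm$ since it is in any case absorbed into the supremum over $\alpha$.
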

\begin{proof}
By continuity it is enough to prove this for dyadic rationals, and by scaling it is then
enough to show that for integers $r=2^{k-1}x$ and $s=2^{k-1}y$ with $0\le r,s\le 2^{k-1}$ we have
\[
 f(2^k-r,2^k+s)\le g(r,s).
\]
this however follows immediately from the definitions of $f$ and $g$ together with the fact that
$P_{[2^k-r,2^k+s)}(z)=P_{[2^k-r,2^k)}(z)+P_{[2^k,2^k+s)}(z)=\pm z^{2^k-1}P_{<r}(-z^{-1})+z^{2^k}P_{<s}(z)$.
\end{proof}

\begin{remark}
The difference between $g(r,s)$ and $f(2^k-r,2^k+s)$ is that we lose information
on the phase difference of the $P_{<r}(-z^{-1})$ term and the $P_{<s}(z)$ term.
This is important due to the rather strange way in which we will need to decompose our
polynomials $P_{[m,n)}$ when $(m,n)$ is close to $(m_k,n_k)$.
However, it is \emph{not\/} enough to define $g(r,s)$ more simply
as $\||P_{<s}(z)|+|P_{<r}(-z^{-1})|\|_L^2$
as this quantity is too large near the critical values of $(r,s)$. It is important
that the \emph{same\/} $\alpha$ is used for both the $z$ and $-z$ terms defining
the $\|\cdot\|_L$ norm in the definition of~$g$.
\end{remark}

Although the definition of $g(r,s)$ is easy to use in proofs, it does not
look so easy to calculate numerically due to the fact that we are taking
supremums over both $z$ and~$\alpha$. However, one can rewrite $g(r,s)$
in a form that avoids the supremum over~$\alpha$. The following was therefore used
in the numerical calculations of $g(r,s)$.

\begin{lemma} For non-negative integers\/ $r$ and\/ $s$,
 \begin{gather*}
  g(r,s)=\sup_{|z|=1}\big\{|P_{<r}(z)|^2+|P_{<r}(-z)|^2+|P_{<s}(z)|^2+|P_{<s}(-z)|^2\\
  \qquad\qquad+2|P_{<s}(z)P_{<r}(-z)-P_{<s}(-z)P_{<r}(z)|\big\}.
 \end{gather*}
\end{lemma}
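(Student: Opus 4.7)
The plan is to expand the definition of $g(r,s)$ directly and optimize the inner supremum over $\alpha$ explicitly. Write
\[
  A = P_{<s}(z), \quad B = P_{<s}(-z), \quad C = z^{-1}P_{<r}(-z^{-1}), \quad D = -z^{-1}P_{<r}(z^{-1}),
\]
so that, substituting $z\mapsto -z$ in the argument of $\|\cdot\|_L$ (noting $(-z)^{-1}=-z^{-1}$),
\[
  \|P_{<s}(z)+\alpha z^{-1}P_{<r}(-z^{-1})\|_L^2 = \sup_{|z|=1}\bigl(|A+\alpha C|^2 + |B+\alpha D|^2\bigr).
\]

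Next I would expand each squared modulus: since $|\alpha|=1$,
\[
  |A+\alpha C|^2+|B+\alpha D|^2 = |A|^2+|B|^2+|C|^2+|D|^2 + 2\Re\bigl(\alpha(\bar{A}C+\bar{B}D)\bigr).
\]
Taking the supremum over $|\alpha|=1$ replaces the last term by $2|\bar{A}C+\bar{B}D|$, which absorbs the $\alpha$ supremum entirely and leaves only the $z$ supremum.

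Now I would simplify the various quantities using that $P_{<r}$ and $P_{<s}$ have real coefficients, so $\overline{P(w)} = P(\bar{w}) = P(w^{-1})$ on $|w|=1$. In particular $|C|^2 = |P_{<r}(-z^{-1})|^2 = |P_{<r}(-z)|^2$ and $|D|^2 = |P_{<r}(z)|^2$, which already produces the four squared-norm terms appearing in the claimed formula. For the cross term,
\[
  \bar{A}C + \bar{B}D = z^{-1}\bigl(P_{<s}(z^{-1})P_{<r}(-z^{-1}) - P_{<s}(-z^{-1})P_{<r}(z^{-1})\bigr),
\]
so $|\bar{A}C+\bar{B}D|$ equals the modulus of the bracketed expression.

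Finally I would observe that all terms inside the supremum are invariant under $z\mapsto z^{-1}$ on the unit circle (the first four by the real-coefficients identity, and the bracketed expression just gets relabelled), so we may replace $z^{-1}$ by $z$ throughout to obtain exactly the stated identity. There is no real obstacle here beyond careful bookkeeping of complex conjugates and the factor $z^{-1}$; the only nontrivial step is that the inner supremum over $\alpha$ can be explicitly evaluated because the dependence is linear in $\alpha$ inside a real part, so $\sup_{|\alpha|=1}\Re(\alpha w) = |w|$.
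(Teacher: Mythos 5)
Your proof is correct and follows essentially the same line as the paper's: expand the squared moduli using $|\alpha|=1$, reduce the $\alpha$-dependence to a term of the form $2\Re(\alpha w)$, maximize over $\alpha$ to obtain $2|w|$, and then simplify using the real-coefficients identity $\overline{P(z)}=P(\bar z)=P(z^{-1})$ on $|z|=1$. The only difference is cosmetic: the paper writes $z^{-1}=\bar z$ and absorbs that unit-modulus factor into the cross term at once, arriving directly at $|P_{<s}(z)P_{<r}(-z)-P_{<s}(-z)P_{<r}(z)|$, whereas you carry the $z^{-1}$s through and remove them by a final $z\mapsto z^{-1}$ relabelling (equivalently, by observing the bracketed expression and its $z\mapsto z^{-1}$ image are complex conjugates, hence have equal modulus).
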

\begin{proof}
Write the function $g(r,s)$ as $\sup_{|\alpha|=1}\sup_{|z|=1} S_{r,s}(\alpha,z)$, where
\begin{align*}
 S_{r,s}(\alpha,z)
 &=|P_{<s}(z)+\alpha z^{-1} P_{<r}(-z^{-1})|^2+|P_{<s}(-z)-\alpha z^{-1} P_{<r}(z^{-1})|^2\\
 &=\big(P_{<s}(z)+\alpha \bar z P_{<r}(-\bar z)\big)\big(P_{<s}(\bar z)+\bar\alpha z P_{<r}(-z)\big) + \{z\mapsto-z\}\\
 &=|P_{<s}(z)|^2+|P_{<r}(-z)|^2+\alpha \bar zP_{<r}(-\bar z)P_{<s}(\bar z)+\bar\alpha zP_{<s}(z)P_{<r}(-z) + \{z\mapsto-z\}\\
 &=|P_{<r}(z)|^2+|P_{<r}(-z)|^2+|P_{<s}(z)|^2+|P_{<s}(-z)|^2\\
 &\quad +\bar \alpha z\big(P_{<s}(z)P_{<r}(-z)-P_{<s}(-z)P_{<r}(z))+\{\text{cplx. conj.}\}.
\end{align*}
Clearly the sum of the last two terms in maximized when $\alpha$ is chosen so that
\[
 \bar\alpha z\big(P_{<s}(z)P_{<r}(-z)-P_{<s}(-z)P_{<r}(z))
\]
is a positive real. Thus
\begin{align*}
 \sup_{|\alpha|=1} S_{r,s}(\alpha,z)&=|P_{<r}(z)|^2+|P_{<r}(-z)|^2+|P_{<s}(z)|^2+|P_{<s}(-z)|^2\\
 &\quad +2|P_{<s}(z)P_{<r}(-z)-P_{<s}(-z)P_{<r}(z)|.
\end{align*}
The result follows on taking the supremum over~$z$.
\end{proof}

The following are refined versions of the continuity statements for $f$ and $g$ that we will need later
in the computer assisted proofs.

\begin{lemma}\label{l:fgcont}
 If\/ $2^kx,2^ky\in\N$ and\/ $|x-x'|,|y-y'|\le 2^{-k-1}$, then
 \begin{align}
  \big|f(x',y')^{1/2}-f(x,y)^{1/2}\big|&\le f(|x'-x|)^{1/2}+f(|y'-y|)^{1/2}\le 3\cdot 2^{-k/2},\label{e:fcont2}\\
  \big|g(x',y')^{1/2}-g(x,y)^{1/2}\big|&\le f(|x'-x|)^{1/2}+f(|y'-y|)^{1/2}\le 3\cdot 2^{-k/2}.\label{e:gcont}
 \end{align}
\end{lemma}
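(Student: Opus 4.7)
The plan is to mirror the one-variable proof of Lemma~\ref{l:fcont} in two variables and to treat $f$ and $g$ in parallel. By continuity of $f$ and $g$ in their arguments, I first reduce to the case where $x'$ and $y'$ are dyadic rationals, and then pick $t\in\N$ with $2^{k+t}x',2^{k+t}y'\in\N$. Setting $M=2^{k+t}x$, $N=2^{k+t}y$, $M'=2^{k+t}x'$, $N'=2^{k+t}y'$, the hypotheses give $|M'-M|,|N'-N|\le 2^{t-1}$ and $M,N$ are both multiples of $2^t$. This is exactly the setup in which the two key identities from the proof of Lemma~\ref{l:fcont} apply, and they yield that $P_{<N'}(z)-P_{<N}(z)$ is of the form $\pm z^c P_{<|N'-N|}(\pm z^{\pm 1})$ (and similarly for $M$ vs $M'$); in particular each such difference has $\|\cdot\|_L$ norm equal to $\|P_{<|N'-N|}\|_L$ or $\|P_{<|M'-M|}\|_L$ respectively.

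For the $f$ statement I would then write $P_{[M',N')}-P_{[M,N)}=(P_{<N'}-P_{<N})-(P_{<M'}-P_{<M})$ and apply the $\|\cdot\|_L$ triangle inequality to obtain
\[
 \big|\|P_{[M',N')}\|_L - \|P_{[M,N)}\|_L\big| \le \|P_{<|N'-N|}\|_L + \|P_{<|M'-M|}\|_L,
\]
which rescales through~\eqref{e:scale} and $f(2x,2y)=2f(x,y)$ to give the first inequality of~\eqref{e:fcont2}. For the $g$ statement I would work at fixed $\alpha$ of modulus one: with $F_\alpha(z;r,s):=P_{<s}(z)+\alpha z^{-1}P_{<r}(-z^{-1})$, the difference $F_\alpha(\cdot;M',N')-F_\alpha(\cdot;M,N)$ splits as $(P_{<N'}(z)-P_{<N}(z))+\alpha z^{-1}(P_{<M'}(-z^{-1})-P_{<M}(-z^{-1}))$. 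Since $\|\cdot\|_L$ is invariant under $z\mapsto -z^{-1}$ and under multiplication by $\alpha z^c$ with $|\alpha|=1$, the two summands have $\|\cdot\|_L$ norms equal to $\|P_{<|N'-N|}\|_L$ and $\|P_{<|M'-M|}\|_L$. The triangle inequality for $\|\cdot\|_L$, applied before taking the supremum over $\alpha$, then gives the analogous inequality for $g$ after rescaling via~\eqref{e:gscale}.

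The numerical upper bound $\le 3\cdot 2^{-k/2}$ is a short corollary of the one-variable results. Theorem~\ref{t:fbound} gives $\sup_{[1,2]}f\le 9$, and the scaling $f(2u)=2f(u)$ forces $\sup_{[0,1]}f=\tfrac12\sup_{[0,2]}f=9/2$. Hence $f(u)\le 9/2^{k+2}$ for all $u\in[0,2^{-k-1}]$, so each of $f(|x'-x|)^{1/2}$ and $f(|y'-y|)^{1/2}$ is at most $(3/2)\cdot 2^{-k/2}$, and the two sum to $3\cdot 2^{-k/2}$.

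The main potential pitfall is the $g$ case, where one has to apply the triangle inequality pointwise in $\alpha$ and only then take the supremum, since the maximizing $\alpha$ for $g(r',s')$ will in general differ from that for $g(r,s)$, and reversing the order would leave an unusable inequality. Otherwise the proof is a routine two-variable extension of Lemma~\ref{l:fcont}, with the modest additional bookkeeping that the perturbation coming from $M\to M'$ acts inside the argument of $P_{<r}(-z^{-1})$ rather than directly on $P_{<s}(z)$, which is handled transparently by the invariances of $\|\cdot\|_L$.
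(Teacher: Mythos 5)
Your argument is correct and is essentially what the paper intends: the paper disposes of the first inequalities in each line by the single remark ``Follows from the same proof as in Lemma~\ref{l:fcont},'' which is exactly the two-variable/$g$-with-fixed-$\alpha$ elaboration you give, and your derivation of the numerical bound $3\cdot 2^{-k/2}$ from $f\le 9$ on $[0,2]$ and the scaling $f(2u)=2f(u)$ matches the paper's computation. One tiny slip: $\sup_{[0,1]}f=\tfrac12\sup_{[0,2]}f$ should then be bounded by $9/2$ via ``$\le$'' rather than asserted as an equality (Theorem~\ref{t:fbound} gives only an upper bound, and in fact $\sup_{[0,2]}f<9$), but this does not affect the conclusion.
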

\begin{proof}
Follows from the same proof as in Lemma~\ref{l:fcont}. For the last inequality we note that
if $z\le 2^{-k-1}$ then $f(z)\le 2^{-k-2}f(2^{k+2}z)\le 9\cdot 2^{-k-2}$. Hence
$f(|x'-x|)^{1/2}+f(|y'-y|)^{1/2}\le 2\cdot 3\cdot 2^{-(k+2)/2}=3\cdot 2^{-k/2}$.
\end{proof}

The following is the key inequality needed to bound $g(x,y)$ near the critical point $(\frac{4}{3},\frac{8}{3})$.

\begin{lemma}\label{l:gcrit}
 For all\/ $x\in[0,\frac12]$ and\/ $y\in[0,1]$,
 \[
  g(1+x,2+y)^{1/2}\le \sqrt{10}+g(x,y)^{1/2}.
 \]
\end{lemma}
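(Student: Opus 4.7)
The plan is to reduce to integers by scaling and then to split the polynomial defining $g(1+x,2+y)$ into a \emph{critical} piece whose $\|\cdot\|_L$-norm is exactly $\sqrt{10\cdot 2^k}$ and a \emph{residual} piece whose $\|\cdot\|_L$-norm is bounded by $g(x,y)^{1/2}$. By Proposition~\ref{p:gbasic}(b) and the continuity supplied by Proposition~\ref{p:gbasic}(c), it is enough to establish the integer analogue
\[
 g(2^k+r,\,2^{k+1}+s)^{1/2}\le \sqrt{10\cdot 2^k}+g(r,s)^{1/2}
\]
for every $k\ge 1$ and every $r,s\in\N$ with $r\le 2^{k-1}$ and $s\le 2^k$. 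These ranges of $r$ and $s$ are exactly what is needed for Proposition~\ref{p:1}(b) to yield
\[
 P_{<2^k+r}(w)=P_k(w)+w^{2^k}P_{<r}(w),\qquad P_{<2^{k+1}+s}(z)=P_{k+1}(z)+z^{2^{k+1}}P_{<s}(z).
\]
Fixing $\alpha$ with $|\alpha|=1$, I would therefore decompose $F(z):=P_{<2^{k+1}+s}(z)+\alpha z^{-1}P_{<2^k+r}(-z^{-1})$ as $F=A+B$ with
\[
 A(z):=P_{k+1}(z)+\alpha z^{-1}P_k(-z^{-1}),\qquad B(z):=z^{2^{k+1}}P_{<s}(z)+\alpha z^{-1-2^k}P_{<r}(-z^{-1}),
\]
and apply the triangle inequality for $\|\cdot\|_L$.

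Controlling the residual $B$ is mostly bookkeeping. The factor $z^{2^{k+1}}$ can be pulled out of both $B(z)$ and $B(-z)$ (here one uses $k\ge 1$, so that $(-1)^{2^{k+1}}=1$ and $(-1)^{-1-2^k}=-1$); after this $|B(z)|^2+|B(-z)|^2$ is seen to coincide, at each point on the unit circle, with the integrand of $\|P_{<s}(w)+\alpha'w^{-1}P_{<r}(-w^{-1})\|_L^2$ evaluated at $w=z$, where $\alpha':=\alpha z^{-3\cdot 2^k}$ has modulus one. Taking the supremum over $z$ and absorbing the $z$-dependence of $\alpha'$ into the supremum over $\alpha'$ in the definition of $g$ yields $\|B\|_L^2\le g(r,s)$, uniformly in the original $\alpha$.

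The main obstacle, and the heart of the lemma, is the bound $\|A\|_L^2\le 10\cdot 2^k$. Using Proposition~\ref{p:1}(c) to rewrite $\alpha z^{-1}P_k(-z^{-1})=\beta z^{-2^k}Q_k(z)$ with $\beta:=(-1)^k\alpha$, and the defining identity $P_{k+1}(z)=P_k(z)+z^{2^k}Q_k(z)$, I recast $A$ as
\[
 A(z)=P_k(z)+\mu(z)\,Q_k(z),\qquad \mu(z):=z^{2^k}+\beta z^{-2^k}.
\]
Because $2^k$ is even for $k\ge 1$, the multiplier satisfies $\mu(-z)=\mu(z)$, and expanding produces
\[
 |A(z)|^2+|A(-z)|^2=S+|\mu|^2\,T+2\Re\!\bigl(\mu W\bigr),
\]
with $S:=|P_k(z)|^2+|P_k(-z)|^2$, $T:=|Q_k(z)|^2+|Q_k(-z)|^2$, and $W:=Q_k(z)\overline{P_k(z)}+Q_k(-z)\overline{P_k(-z)}$. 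A crude bound on $W$ does not produce the constant $10$; the crux of the argument will be that $W=0$ identically. Indeed, applying Proposition~\ref{p:1}(c) to both $Q_k(z)$ and $Q_k(-z)$ and using $\overline{P_k(z)}=P_k(z^{-1})$ on $|z|=1$, the two summands of $W$ are seen to differ only by the factor $(-1)^{2^k-1}=-1$ (which is where $k\ge 1$ is used), and therefore cancel. Combined with Proposition~\ref{p:1}(a), which gives $S+T=2^{k+2}$ and $T\le 2^{k+1}$, and with the elementary bound $|\mu|^2\le 4$, this yields
\[
 S+|\mu|^2\,T\le (S+T)+3T\le 2^{k+2}+3\cdot 2^{k+1}=10\cdot 2^k,
\]
as required. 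The triangle inequality then gives $\|F\|_L\le\|A\|_L+\|B\|_L\le\sqrt{10\cdot 2^k}+g(r,s)^{1/2}$, and taking the supremum over $\alpha$, followed by rescaling and continuity, completes the argument.
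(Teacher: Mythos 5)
Your overall decomposition is the same as the paper's: reduce to integers by scaling, split the polynomial defining $g$ into a ``critical'' piece $A$ built from $P_{k+1}$ and $P_k$ and a ``residual'' piece $B$ built from $P_{<s}$ and $P_{<r}$, apply the triangle inequality for $\|\cdot\|_L$, and absorb the $z$-dependent phase $\alpha'=\alpha z^{-3\cdot 2^k}$ into the supremum defining $g(r,s)$ to get $\|B\|_L^2\le g(r,s)$. That part matches the paper step for step. Where you diverge is in producing the constant $10$: the paper simply observes that $\sup_\alpha\|A\|_L^2 = g(2^k,2^{k+1}) = 2^k g(1,2)$ and computes $g(1,2)=\sup_{\alpha,z}2\bigl(1+|z+\alpha z^{-1}|^2\bigr)=10$ in one line, whereas you recast $A = P_k + \mu Q_k$ with $\mu = z^{2^k}+\beta z^{-2^k}$, expand $|A(z)|^2+|A(-z)|^2 = S+|\mu|^2 T + 2\Re(\mu W)$, and show that the cross-term $W$ vanishes identically. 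That cancellation is correct (it is exactly Proposition~\ref{p:1}(c) applied twice together with the parity of $2^k-1$), and it is a genuinely different and somewhat more illuminating way to extract the constant, since it explains it structurally rather than by an ad hoc optimisation.

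There is, however, one real gap in the write-up of the $\|A\|_L$ bound. You assert that Proposition~\ref{p:1}(a) gives $T=|Q_k(z)|^2+|Q_k(-z)|^2\le 2^{k+1}$. It does not: Proposition~\ref{p:1}(a) gives $S+T=2^{k+2}$ and, via its ``in particular'' clause, only $|Q_k(\pm z)|^2\le 2^{k+1}$ pointwise, hence $T\le 2^{k+2}$. With $T\le 2^{k+2}$ your final line would give $S+|\mu|^2 T\le 2^{k+4}$, i.e.\ $\|A\|_L\le 4\cdot 2^{k/2}$, which is strictly weaker than $\sqrt{10\cdot 2^k}$ and would not yield the lemma. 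The inequality $T\le 2^{k+1}$ you actually need is true, but it comes from the $L$-norm identity $\|Q_k\|_L^2 = 2^{k+1}$, i.e.\ equation~\eqref{e:qtnorm} of the paper (equivalently, the parallelogram identity in Lemma~\ref{l:scale} forces $|Q_k(z)|^2+|Q_k(-z)|^2$ to be constant equal to $2^{k+1}$). With that citation corrected, the rest of your bound $S+|\mu|^2T\le (S+T)+3T\le 2^{k+2}+3\cdot 2^{k+1}=10\cdot 2^k$ is right, and the proof goes through.

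Two very minor remarks: your framing says the critical piece has $\|\cdot\|_L$-norm ``exactly $\sqrt{10\cdot 2^k}$''; for a fixed $\alpha$ you are only proving an upper bound, and only $\sup_\alpha\|A\|_L$ attains $\sqrt{10\cdot 2^k}$. And your use of $k\ge1$ to guarantee the parity facts (that $2^{k+1}$ and $3\cdot2^k$ are even and $2^k-1$ is odd) is correct and worth keeping explicit, since the claim $W=0$ genuinely fails at $k=0$.
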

\begin{proof}
Writing $r=2^{k-1}x$ and $s=2^{k-1}y$ we have $0\le r\le 2^{k-2}$ and $0\le s\le 2^{k-1}$. Thus
$P_{<2^{k-1}+r}(z)=P_{<2^{k-1}}(z)+z^{2^{k-1}}P_{<r}(z)$ and $P_{<2^k+s}(z)=P_{2^k}(z)+z^{2^k}P_{<s}(z)$.
Clearly we may assume $k\ge2$, so that
\begin{align*}
 g(2^{k-1}&+r,2^k+s)^{1/2}\\
 &=\sup_\alpha\big\|P_{<2^k}(z)+z^{2^k}P_{<s}(z)+\alpha z^{-1} P_{<2^{k-1}}(-z^{-1})+\alpha z^{-1-2^{k-1}}P_{<r}(-z^{-1})\big\|_L\\
 &\le \sup_\alpha\big\|P_{<2^k}(z)+\alpha z^{-1} P_{<2^{k-1}}(-z^{-1})\|_L+\sup_\alpha\|z^{2^k}P_{<s}(z)+\alpha z^{-1-2^{k-1}}P_{<r}(-z^{-1})\big\|_L\\
 &=g(2^{k-1},2^k)^{1/2}+\sup_{\beta,z}\big\|\big(P_{<s}(z)+\beta z^{-1}P_{<r}(-z^{-1}),P_{<s}(-z)-\beta z^{-1}P_{<r}(z^{-1})\big)\big\|_2\\
 &=g(2^{k-1},2^k)^{1/2}+\sup_\beta\|P_{<s}(z)+\beta z^{-1}P_{<r}(-z^{-1})\|_L\\
 &=g(2^{k-1},2^k)^{1/2}+g(r,s)^{1/2},
\end{align*}
where $\beta=\alpha z^{-3\cdot2^{k-1}}=\alpha(-z)^{-3\cdot 2^{k-1}}$.
Hence after scaling we have
\[
 g(1+x,2+y)^{1/2}\le g(1,2)^{1/2}+g(x,y)^{1/2}.
\]
Finally we observe that
\begin{align*}
 g(1,2)&=\sup_{\alpha,z}\big(|1+z+\alpha z^{-1}|^2+|1-z-\alpha z^{-1}|^2\big)\\
 &=\sup_{\alpha,z}2\big(|1|^2+|z+\alpha z^{-1}|^2)=2\cdot (1^2+2^2)=10.\qedhere
\end{align*}
\end{proof}

We now come to the key bound we need on $g(x,y)$.

\begin{lemma}\label{l:gbound}
 For\/ $x\in[0,2]$, $y\in[0,4]$ we have
 \begin{equation}\label{e:gb}
  g(x,y)\le \min\{10(x+y),40\}.
 \end{equation}
 In particular, $g(x,y)\le 10(x+y)$ for all\/ $x,y\ge0$.
\end{lemma}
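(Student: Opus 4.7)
The plan is to prove the bound on the rectangle $[0,2]\times[0,4]$ and then deduce the ``in particular'' claim for all $(x,y)\ge 0$ by scaling. Given any $(x,y)\ge 0$, choose the least $k\ge 0$ with $(2^{-k}x,2^{-k}y)\in[0,2]\times[0,4]$; iterated application of Proposition~\ref{p:gbasic}(b) then yields $g(x,y)=2^{k}g(2^{-k}x,2^{-k}y)\le 2^{k}\cdot 10(2^{-k}x+2^{-k}y)=10(x+y)$, as required.

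On $[0,2]\times[0,4]$ I would follow the template of Theorem~\ref{t:fbound}: by continuity of $g$ (Proposition~\ref{p:gbasic}(c)) it suffices to verify the bound at dyadic rationals, and via \eqref{e:gscale} this amounts to an integer inequality to be proved by induction on the denominator-exponent. The heart of the argument is the critical subregion $[1,\tfrac32]\times[2,3]$ around the extremal point $(\tfrac43,\tfrac83)$, where the cap $40$ in $\min(10(x+y),40)$ becomes binding. Here I would write $(x,y)=(1+u,2+v)$ with $(u,v)\in[0,\tfrac12]\times[0,1]$ and apply Lemma~\ref{l:gcrit},
\[
 g(x,y)^{1/2}\le\sqrt{10}+g(u,v)^{1/2}.
\]
If $u+v\le 1$, the inductive hypothesis gives $g(u,v)\le 10(u+v)$, and squaring yields $g(x,y)\le 10+20\sqrt{u+v}+10(u+v)\le 10(x+y)$, with equality exactly at $(u,v)=(\tfrac13,\tfrac23)$. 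If $u+v\ge 1$, the strictly-smaller-scale point $(4u,4v)\in[0,2]\times[0,4]$ satisfies $4u+4v\ge 4$, so by the inductive hypothesis together with the scaling $g(u,v)=\tfrac14g(4u,4v)$ one gets $g(u,v)\le\tfrac14\cdot 40=10$, and squaring gives $g(x,y)\le(\sqrt{10}+\sqrt{10})^2=40$. Together these cover both branches of $\min(10(x+y),40)$ throughout the critical rectangle, recovering in particular the extremal value $g(\tfrac43,\tfrac83)=40$.

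Outside the critical rectangle, the target is well above the true value of $g$, so one can cover $[0,2]\times[0,4]\setminus([1,\tfrac32]\times[2,3])$ by a finite grid of dyadic sample points, evaluating $g(r,s)$ numerically via the formula of the preceding lemma and propagating the bound to a surrounding box by the continuity estimate \eqref{e:gcont} of Lemma~\ref{l:fgcont}, in the style of Tables~\ref{t:1}--\ref{t:2}. Alternatively, in strips where $r$ or $s$ is small, the triangle inequality for $\|\cdot\|_L$ combined with the decomposition of $P_{<s}$ or $P_{<r}$ into shorter $\pm P_t,\pm Q_t$ blocks via Proposition~\ref{p:1}(b) together with \eqref{e:ptnorm}--\eqref{e:qtnorm} and Theorem~\ref{t:fbound} gives the desired estimate analytically. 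Base cases at small denominators are handled by direct computation.

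The main obstacle is bookkeeping the induction cleanly: near the boundary $u+v=1$ of the critical rectangle both branches of the argument must agree, the scaling step $g(u,v)=\tfrac14 g(4u,4v)$ must always feed into a strictly smaller scale (so the induction really does descend), and the dyadic grid outside the critical rectangle must be coarse enough to be finite yet fine enough that Lemma~\ref{l:fgcont} delivers the required bound on each covering box. Coordinating these pieces so the induction closes across all scales, while respecting the self-similarity that makes the constant $10$ (rather than some larger number) correct, is the delicate engineering step.
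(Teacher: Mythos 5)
Your proposal matches the paper's strategy in its main components: the critical square $[1,\frac32]\times[2,3]$ is handled exactly as in the paper via Lemma~\ref{l:gcrit} together with induction (the two branches $u+v\le1$ and $u+v\ge1$ giving $10(x+y)$ and $40$ respectively), the rest is to be done by a computer-assisted continuity argument using Lemma~\ref{l:fgcont}, and your scaling deduction of the ``in particular'' statement is fine. However, there is a genuine gap. You assert that on $[0,2]\times[0,4]\setminus([1,\frac32]\times[2,3])$ ``the target is well above the true value of $g$'' and propose to certify the bound there by a finite dyadic grid. That assertion is false: by the scaling $g(2x,2y)=2g(x,y)$, equality $g(x,y)=10(x+y)$ persists along the whole sequence of rescaled critical points $(\frac{4}{3}4^{-j},\frac{8}{3}4^{-j})$, $j\ge1$, which accumulate at the origin and all lie outside $[1,\frac32]\times[2,3]$. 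No finite grid combined with the continuity estimate \eqref{e:gcont} can certify a non-strict inequality near such a sequence, so the recursive subdivision would not terminate there. Your analytic fallback for small $r$ or $s$ is also too lossy: the block decomposition via Proposition~\ref{p:1}(b), or more simply $g(x,y)\le\bigl(f(x)^{1/2}+f(y)^{1/2}\bigr)^2\le\bigl(\sqrt{6x}+\sqrt{6y}\,\bigr)^2\le12(x+y)$, gives the constant $12$, not~$10$.

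The missing idea, which is how the paper closes exactly this gap, is to first reduce to the region $B=([0,2]\times[0,4])\setminus([0,1]\times[0,2])\setminus([0,2]\times[0,1])$ before running the computer check. Using the symmetry $g(x,y)=g(y,x)$ from Proposition~\ref{p:gbasic}(a) and the scaling from Proposition~\ref{p:gbasic}(b), any $(x,y)\in[0,1]\times[0,2]$ with $0<x\le y$ satisfies $2^k(x,y)\in B$ for some $k\ge1$, and then $g(x,y)=2^{-k}g(2^kx,2^ky)\le\min\{10(x+y),40\cdot2^{-k}\}$; the edge case $x=0$ is $g(0,y)=f(y)\le6y$. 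The region $B$ excludes the origin and every rescaled critical point other than $(\frac43,\frac83)$ itself, so outside the critical square the inequality is strict on $B$ and the finite computer check does terminate. A minor further inaccuracy: the equality case in your chain of inequalities on the critical square is $u+v=1$, not just $(u,v)=(\frac13,\frac23)$; the latter is where $g$ itself attains the bound, but that is not what the displayed argument shows.
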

\begin{proof}
We use induction to bound $g(r,s)$ for integers $r,s$, and scale using \eqref{e:gscale},
however for ease of exposition we will write the proof in terms of $x,y\in\R$, which
by continuity may be considered dyadic rationals.

Firstly, we can reduce to the case when $(x,y)$ lies inside the blue contour in Figure~\ref{f:2},
namely
\[
 (x,y)\in B:=([0,2]\times[0,4])\setminus ([0,1]\times[0,2])\setminus ([0,2]\times[0,1]).
\]
Indeed, for $x\ge y$ we have $g(x,y)=g(y,x)$, and for $x\le y$, $(x,y)\in[0,1]\times[0,2]$, we have
$2^k(x,y)\in B$ for some $k\ge1$. Then
\[
  g(x,y)=\tfrac{1}{2^k}g(2^kx,2^ky)\le \min\{10(x+y),40\cdot 2^{-k}\}\le\min\{10(x+y),40\}.
\]

If $x=1+x'\in[1,\frac{3}{2}]$ and $y=2+y'\in[2,3]$ (inside red contour in Figure~\ref{f:1})
induction ($2^kx'<2^kx$, $2^ky'<2^ky$) implies that
\[
 g(x',y')=\tfrac14 g(4x',4y')\le\tfrac14 \min\{10(4x'+4y'),40\}=\min\{10(x'+y'),10\}.
\]
Thus by Lemma~\ref{l:gcrit}
\[
 g(x,y)^{1/2}\le \sqrt{10}+\min\{\sqrt{10(x'+y')},\sqrt{10}\},
\]
so for $x+y\le 4$ ($x'+y'\le 1$) we have
\begin{align*}
 g(x,y)&\le \big(\sqrt{10}+\sqrt{10(x'+y')}\big)^2=10+20\sqrt{x'+y'}+10(x'+y')\\
 &\le 30+10(x'+y')=10(x+y),
\end{align*}
and for $x+y\ge 4$ ($x'+y'\ge1$)
\[
 g(x,y)\le \big(\sqrt{10}+\sqrt{10}\big)^2=40.
\]

In the remaining cases (between the red and blue contours) the inequality is strict, and so can be proved by computer.
We divide up the region into dyadic squares $S_{r,s}=[2^{-k}r,2^{-k}(r+1)]\times[2^{-k}s,2^{-k}(s+1)]$ and
evaluate $g(x,y)$ numerically at each corner of $S_{r,s}$. We then divide $S_{r,s}$ up into 4 smaller dyadic squares
$S'_{r',s'}=[2^{-k-1}r',2^{-k-1}(r'+1)]\times[2^{-k-1}s',2^{-k-1}(s'+1)]$, $r'\in\{2r,2r+1\}$, $s'\in\{2s,2s+1\}$.
Using Lemma~\ref{l:fgcont} we try to prove the bound \eqref{e:gb} for each of the four smaller squares $S'_{r',s'}$
using the value of $g(x,y)$ at the corresponding corner of~$S_{r,s}$. Note that all points $(x',y')\in S'_{r',s'}$
satisfy the conditions $|x'-x|,|y'-x|\le 2^{-k-1}$. If this fails, we recursively subdivide each $S'_{r',s'}$
in the same way. Once we get to squares of side length $2^{-6}$ we give up and mark the square as bad.

This procedure was applied to the whole of $[0,4]^2$ and the result is shown in Figure~\ref{f:2}. The only bad squares
that lie inside the blue contour also lie inside the red contour, so we are done.
\end{proof}

\begin{figure}
\centerline{\includegraphics[width=2.5in]{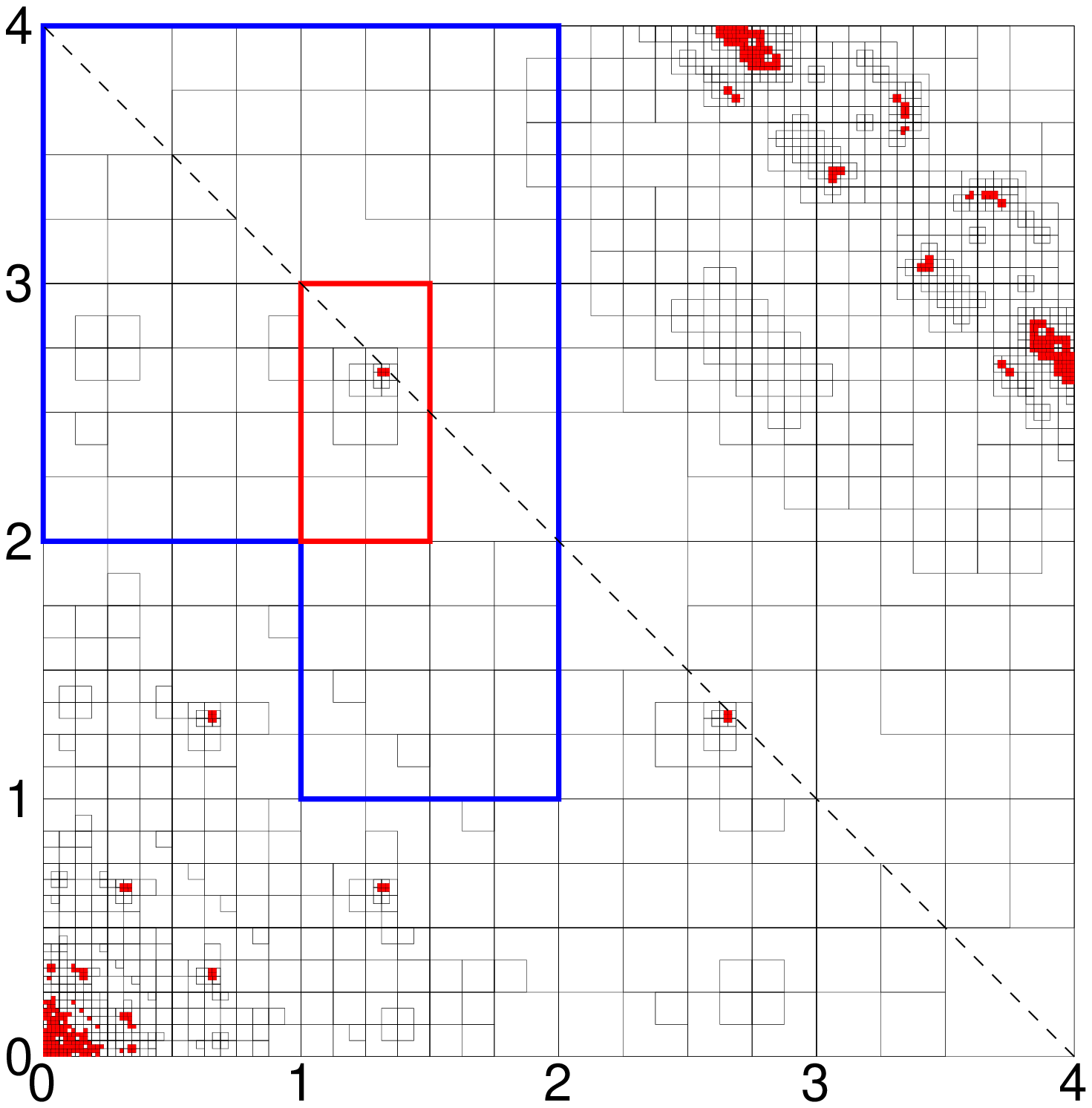}\qquad\includegraphics[width=2.5in]{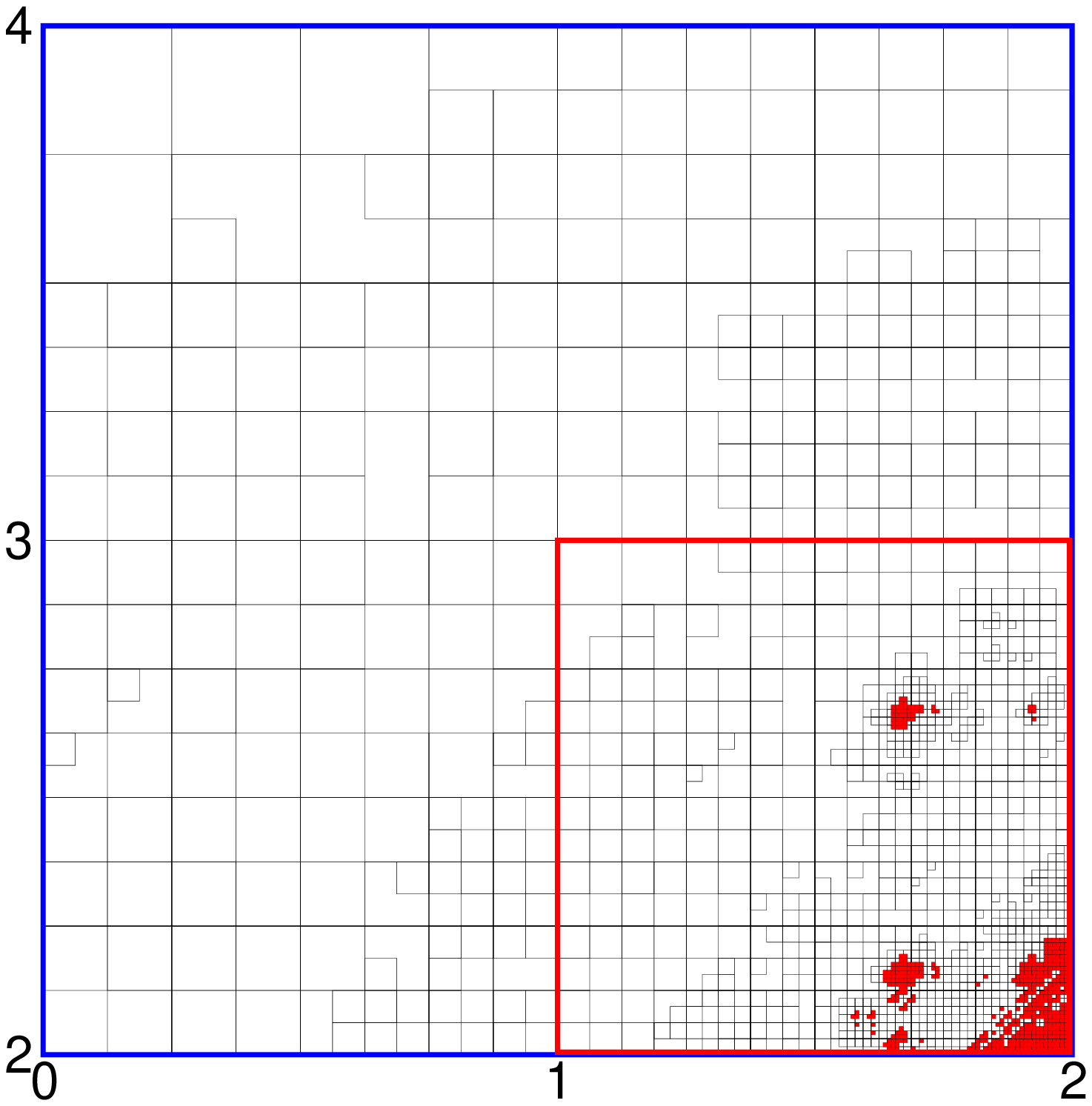}}
\caption{Bounding $g(x,y)$ in Lemma~\ref{l:gbound} (left) and $f(x,y)$ in Theorem~\ref{t:twodim} (right).
Region between red and blue contours is divided recursively into squares in an attempt to prove bounds. The boundaries
of the squares are shown only if the first attempt to bound $f$ or $g$ on them failed (so they occur as an $S_{r,s}$
square in the proofs).
On the left, the dashed line indicates the line $10(x+y)=40$. The regions outside of the blue contours and inside the red contours
are shown for illustration only and are not used in the proofs.}\label{f:2}
\end{figure}

\begin{proof}[Proof of Theorem~\ref{t:twodim}.]
The result clearly holds for $0\le m\le n<2$, so assume $n\ge2$ and fix $k\ge0$ so that $2^{k+1}\le n<2^{k+2}$.
Now if $m\ge 2^{k+1}$ we can use the fact that $Q_{k+1}(z)=\pm z^{2^{k+1}-1}P_{k+1}(-z^{-1})$
to deduce that $\|P_{[m,n)}\|_L=\|P_{[2^{k+2}-n,2^{k+2}-m)}\|_L$. But
$2^{k+2}-m<n$, so we are done by induction on~$n$.
Thus we may assume that $m=2^kx$, $n=2^ky$, with $(x,y)\in [0,2]\times[2,4]$.
For $(x,y)\in[1,2]\times[2,3]$ we use Lemmas \ref{l:compfg} and \ref{l:gbound} to deduce that
\[
 f(x,y)\le g(2-x,y-2)\le 10(y-x).
\]
In all other cases the bounds are strict, and so can be proved by computer in an exactly analogous way
to Lemma~\ref{l:gbound}.
\end{proof}

\section{Proof of Theorem~\ref{t:dense}}\label{s:dense}

We first describe the strategy used to prove Theorem~\ref{t:dense}.
We apply Proposition~\ref{p:1}(b) to deduce that
\begin{align*}
 P_{[2^{k+1}m,2^{k+1}n)}(w)
 &=P_k(w)P_{[m,n)}(w^{2^{k+1}})+w^{2^k}Q_k(w)P_{[m,n)}(-w^{2^{k+1}})\\
 &=\big\langle (P_k(w),Q_k(w))^T, (P_{[m,n)}(\bar z^2),\bar z P_{[m,n)}(-\bar z^2))^T\big\rangle,
\end{align*}
where $z=w^{2^k}$ and $\langle u,v\rangle = u^T\bar v$ is the standard inner product on $\C^2$.
To maximize this expression we pick $z$ to be such that $\|(P_{[m,n)}(z^2),P_{[m,n)}(-z^2))\|_2$
is maximized, so that then
$\|(P_{[m,n)}(\bar z^2),\bar zP_{[m,n)}(-\bar z^2))\|_2=\|(P_{[m,n)}(z^2),P_{[m,n)}(-z^2))\|_2=\|P_{[m,n)}\|_L$.
We then wish
to pick $w$ so that $(P_k(w),Q_k(w))$ is nearly parallel to $(P_{[m,n)}(\bar z^2),\bar z P_{[m,n)}(-\bar z^2))$
so as to maximize the inner product. In this case we would have
\begin{align*}
 P_{[2^{k+1}m,2^{k+1}n)}(w)
 &=\big\langle (P_k(w),Q_k(w))^T, (P_{[m,n)}(\bar z^2),\bar zP_{[m,n)}(-\bar z^2))^T\big\rangle\\
 &\approx \|(P_k(w),Q_k(w))\|_2\cdot \|(P_{[m,n)}(\bar z^2),\bar zP_{[m,n)}(-\bar z^2))\|_2\\
 &=2^{(k+1)/2}\|P_{[m,n)}\|_L,
\end{align*}
and so $|P_{[2^{k+1}m,2^{k+1}n)}(w)|/2^{(k+1)/2}\approx \|P_{[m,n)}\|_L$ as required. Hence
it is enough to show that, for large~$k$, we can approximate any vector in the 3-sphere
\[
 S^3:=\{(\alpha,\beta)\in\C^2: |\alpha|^2+|\beta|^2=1\}
\]
with $(P_k(w)/2^{(k+1)/2},Q_k(w)/2^{(k+1)/2})$ for an appropriately chosen~$w$.

In \cite{R17} it was shown that for $w$ taken uniformly at random from $S^1:=\{w\in\C:|w|=1\}$ we have that
$P_k(w)/2^{(k+1)/2}$ converges in distribution as $k\to\infty$
to a uniform random variable in the unit disk $D:=\{z\in\C:z\le1\}$.
Indeed, a stronger theorem was proved. Let
\[
 G(w)=\frac{1}{\sqrt{2}}\begin{pmatrix}1&w\\1&-w\end{pmatrix}
\]
and note that
\[
 \begin{pmatrix}P_k(w)/2^{(k+1)/2}\\Q_k(w)/2^{(k+1)/2}\end{pmatrix}
 =G(w^{2^{k-1}})G(w^{2^{k-2}})\cdots G(w^2)G(w)\begin{pmatrix}2^{-1/2}\\2^{-1/2}\end{pmatrix}
\]
In \cite{R17} it is shown that if $w$ is distributed uniformly on $S^1$, then $G(w^{2^{k-1}})\cdots G(w)$ tends
in distribution to the Haar measure on the compact Lie group $U(2)$. This implies that
$(P_k(w)/2^{(k+1)/2},Q_k(w)/2^{(k+1)/2})$ tends in distribution to a uniform random variable in~$S^3$, and in particular
we can approximate any $(\alpha,\beta)\in S^3$ arbitrarily accurately as $k\to\infty$.

However, we also need the condition that $w^{2^k}=z$, so we cannot take $w$ to be uniform in~$S^1$. Fortunately
the proof in~\cite{R17} actually proves the following stronger statement.

\begin{theorem}\label{t:u2}
 For each\/~$k$, let\/ $w=w_k$ be drawn from a distribution\/ $\cD_k$ supported on\/ $S^1$ with the property that\/
 $\E(w^n)=0$ for all\/ $n$ such that\/ $2^k\ndv n$.
 Then\/ $G(w^{2^{k-1}})\cdots G(w)$ tends in distribution to the Haar measure on\/ $U(2)$ as\/ $k\to\infty$.
\end{theorem}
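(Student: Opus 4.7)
The plan is to verify that Rodgers' proof in~\cite{R17} of convergence to Haar measure on $U(2)$ extends to the more general distributions $\mathcal{D}_k$ considered here, with essentially no modification to the argument.

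By the Peter--Weyl theorem, it suffices to show $\mathbb{E}_{\mathcal{D}_k}[\rho(A_k)] \to 0$ in operator norm for every nontrivial irreducible unitary representation $\rho$ of $U(2)$, where $A_k := G(w^{2^{k-1}})\cdots G(w)$. Since $\det A_k = (-1)^k w^{2^k - 1}$ is a monomial and every irrep of $U(2)$ decomposes as $\rho = \sigma_n \otimes \det^b$ (with $\sigma_n$ the $n$-th symmetric power of the defining representation), each matrix coefficient $\rho(A_k)_{ij}(w)$ is a Laurent polynomial $\sum_\ell c_\ell(k)\, w^\ell$ whose exponents $\ell$ lie in an interval of length $O(n \cdot 2^k)$. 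The moment hypothesis kills all terms with $2^k \nmid \ell$, leaving
\[
 \mathbb{E}_{\mathcal{D}_k}[\rho(A_k)_{ij}] = \sum_{\ell:\,2^k \mid \ell} c_\ell(k)\, \mathbb{E}_{\mathcal{D}_k}(w^\ell),
\]
a finite sum over at most $n+1$ values of $\ell$. The key observation is that the moment hypothesis is equivalent to $\mathcal{D}_k$ being invariant under multiplication by $\mu_{2^k}$, so by conditioning on the fiber of $w \mapsto w^{2^k}$ we reduce to the case $w = \zeta w_0$ with $\zeta$ uniform on $\mu_{2^k}$ and $w_0 \in S^1$ fixed; the general case follows by averaging over~$w_0$.

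In this reduced setting we apply Rodgers' argument. Rodgers proceeds by iterating the recursion $A_{k+1}(w) = G(w^{2^k}) A_k(w)$ and estimating $\mathbb{E}[\rho(A_k)]$ via joint moments of the variables $w^{2^j}$ for $j < k$. For a fixed irrep $\rho$, the joint moments required are of bounded order depending only on $\dim\rho$. For $w = \zeta w_0$, the marginal of $w^{2^j}$ is a rotation of the uniform distribution on $\mu_{2^{k-j}}$, which agrees with the uniform distribution on $S^1$ on all moments of order strictly less than $2^{k-j}$. For fixed $\rho$ and $k - j$ sufficiently large, every moment actually used by Rodgers therefore coincides with the uniform case, so his estimates go through verbatim. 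The $O(1)$ remaining "late" scales, where $2^{k-j}$ is too small, can be handled by absorbing them into the initial condition of the recursion (or by applying a fixed continuity estimate for the product of the last few $G$ factors).

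The main obstacle is making the above moment-boundedness explicit, since it is not directly stated as such in~\cite{R17}. Verifying it is in principle routine: at each recursive step, $\rho(G(w^{2^j}))$ is a polynomial of degree at most $\dim\rho$ in $w^{2^j}$, so the iterative computation of $\mathbb{E}[\rho(A_k)]$ only invokes moments of bounded order at each scale. Once this is confirmed, Rodgers' argument applies without modification to conclude $\mathbb{E}_{\mathcal{D}_k}[\rho(A_k)] \to 0$, which by Peter--Weyl is equivalent to the claimed convergence in distribution.
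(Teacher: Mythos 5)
Your overall architecture matches the paper's: reduce via Peter--Weyl to showing $\E[\pi(A_k)]\to0$ for nontrivial irreducibles, note the matrix entries are Laurent polynomials in $w$ with exponent range $O(2^k)$, and use the moment hypothesis to kill all contributions with $2^k\nmid\ell$. But the step where you close the argument is where the two proofs diverge, and I believe yours has a gap.

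The paper's closing step is this: in Rodgers' induction, passing from $A_k$ to $A_{k+1}$ and discarding coefficients at non-multiples of $2^{k+1}$ defines a \emph{fixed} linear map $S$ on the finite-dimensional vector of ``relevant'' coefficients $\bigl(p^{(k)}_{i,2^k j}\bigr)_{i,j}\in\C^{d(N+M+1)}$, and Rodgers shows $\|S\|<1$. Hence the \emph{entire} vector of coefficients at multiples of $2^k$ (not just the constant term) tends to $0$. Since $|\E(w^n)|\le1$ always, this immediately bounds $\E[\pi(A_k)]$ by the $\ell_1$-norm of that vector, and you are done --- no comparison with the uniform distribution is ever needed. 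Your proof instead reduces (correctly) to $w=\zeta w_0$ with $\zeta$ uniform on $\mu_{2^k}$, and then tries to argue that the joint moments Rodgers uses coincide with the uniform ones at all but $O(1)$ ``late'' scales. Two problems. First, the moments entering $\E[\pi(A_k)]$ are not per-scale moments of $w^{2^j}$ but joint moments $\E(w^{\sum_j 2^j m_j})$, and it is the \emph{total} exponent's divisibility by $2^k$ that matters; ``matching the uniform case at each scale separately'' is not the right invariant to track. Second, the ``late'' scales $j$ near $k$ are the \emph{last} factors applied in the recursion $A_{j+1}=G(w^{2^j})A_j$, so they cannot be ``absorbed into the initial condition,'' and the alternative you parenthetically offer (a continuity estimate for the last few $G$ factors) does not work either, because $\E$ does not distribute over the product: from $A_k=G(w^{2^{k-1}})\cdots G(w^{2^{k-C}})A_{k-C}$ you cannot conclude $\|\E[\pi(A_k)]\|\le\|\E[\pi(A_{k-C})]\|$. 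This is exactly the point the paper's contraction argument is designed to sidestep. What your proposal is missing is the observation that Rodgers' induction already controls all the relevant coefficients $p^{(k)}_{i,2^k j}$ uniformly in $j$, not merely $j=0$; with that observation the theorem is immediate and no moment-comparison is required.
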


Indeed, the proof in~\cite{R17} follows by showing that, for any (finite dimensional) irreducible
representation $\pi$ of $U(2)$,
\begin{equation}\label{e:exp}
 \E\big[\pi(G(w^{2^{k-1}}))\dots \pi(G(w))\big]\to0\qquad\text{as }k\to\infty.
\end{equation}
Convergence in distribution to the Haar measure then follows from standard results (see Theorem~2.1 of~\cite{R17}).

For each fixed $\pi$, the expression inside the expectation in \eqref{e:exp} is a matrix
with entries that are polynomials in $w$ and~$w^{-1}$,
and the proof in~\cite{R17} proceeds by induction on~$k$, keeping only the terms $w^n$ with $2^k\mid n$ at each stage.
More specifically, assume $\pi$ is of dimension~$d$ and let $v\in\C^d$ be fixed. Then
\[
 \pi(G(w^{2^{k-1}}))\cdots \pi(G(w))v = \big(p^{(k)}_1(w),\dots,p^{(k)}_d(w)\big)^T
\]
where each $p^{(k)}_i(w)=\sum_{j=-2^kN}^{2^kM}p^{(k)}_{i,j}w^j\in \C[w,w^{-1}]$
for some fixed $N$ and $M$ depending only on~$\pi$. The coefficients $p^{(k+1)}_{i,2^kj}$
depend only on the coefficients $p^{(k)}_{\ell,2^km}$ as the entries of the $d\times d$ matrix $\pi(G(w^{2^k}))$
all lie in $\C[w^{2^k},w^{-2^k}]$. Thus by ignoring all terms $w^n$
with $2^k\ndv n$ in $p^{(k)}_i$ and dropping terms with $2^{k+1}\ndv n$ in $p^{(k+1)}_i$
we obtain a linear map
\[
 S\colon \C^{d(N+M+1)}\to\C^{d(N+M+1)};\qquad
 S\big(\big(p^{(k)}_{i,2^kj}\big)_{i,j}\big)=\big(p^{(k+1)}_{i,2^{k+1}j}\big)_{i,j}
\]
which is in fact easily seen to be \emph{independent\/} of~$k$. It is then shown that $\|S\|=\rho<1$ and so
$p^{(k)}_{i,2^kj}\to0$ as $k\to\infty$ for all $i,j$.

In \cite{R17} it is enough that the $j=0$ terms tend to 0 as for a uniform random variable on $S^1$
we have $\E(w^n)=0$ for all $n\ne0$. However, the proof clearly shows that the
whole vector $\big(p^{(k)}_{i,2^kj}\big)_{i,j}\in\C^{d(N+M+1)}$ tends to~0 as $k\to\infty$.
Thus if $\E(w^n)=0$ for all $n$ with $2^k\ndv n$ (and $|\E(w^n)|\le 1$ otherwise)
we see that for any $v\in\C^d$,
\[
 \big\|\E\big[\pi(G(w^{2^{k-1}}))\cdots \pi(G(w))v\big]\big\|_2\to0\qquad\text{as }k\to\infty.
\]
Hence \eqref{e:exp} holds and Theorem~\ref{t:u2} follows.

\begin{proof}[Proof of Theorem~\ref{t:dense}]
As shown above, it is enough to show that for any $(\alpha,\beta)\in S^3$, $\eps>0$, and $z\in S^1$ we can find,
for sufficiently large~$k$, a $w$ satisfying $w^{2^k}=z$ with
\[
 \big\|(P_k(w)/2^{(k+1)/2},Q_k(w)/2^{(k+1)/2})-(\alpha,\beta)\big\|_2<\eps
\]
For each $k$ we let $w=w_k$ be chosen uniformly at random from the solutions of $w^{2^k}=z$ and note
that $\E(w^n)=0$ for all $n$ with $2^k\ndv n$. Thus we can apply Theorem~\ref{t:u2}
to deduce that
\[
 \begin{pmatrix}P_k(w)/2^{(k+1)/2}\\Q_k(w)/2^{(k+1)/2}\end{pmatrix}
 =G(w^{2^{k-1}})\cdots G(w^2)G(w)\begin{pmatrix}2^{-1/2}\\2^{-1/2}\end{pmatrix}
\]
tends in distribution to the uniform measure on $S^3$ as $k\to\infty$. Thus for sufficiently
large~$k$, there is a positive probability that $(P_k(w)/2^{(k+1)/2},Q_k(w)/2^{(k+1)/2})$
lies in the ball of radius $\eps$ around $(\alpha,\beta)\in S^3$, and hence there exists
a solution $w$ of $w^{2^k}=z$ with this property.
\end{proof}

Finally we deduce~\eqref{e:max10} by estimating $\|P_{[m_k,n_k)}\|_L$. Recall that
\[
 m_k:=\frac{5\cdot 4^k+1}{3},\qquad n_k:=\frac{8\cdot 4^k+1}{3}.
\]
Thus $m_k=4m_{k-1}-1$ and $n_k=4n_{k-1}-1$. Also it is easy to check that $a_{m_k}=1$ and $a_{n_k}=-1$
for $k\ge 0$. Hence, by Proposition~\ref{p:1}(b),
\begin{align}
 P_{[m_{k+1},n_{k+1})}(z)
 &=P_2(z)P_{[m_k,n_k)}(z^4)+z^2Q_2(z)P_{[m_k,n_k)}(-z^4)+a_{m_{k+1}}z^{m_{k+1}}-a_{n_{k+1}}z^{n_{k+1}}\notag\\
 &=(1+z)P_{[m_k,n_k)}(z^4)+(z^2-z^3)P_{[m_k,n_k)}(-z^4)+z^{m_{k+1}}+z^{n_{k+1}}.\label{e:badrec}
\end{align}
Taking $z=\pm1$ we have $P_{[m_0,n_0)}(z)=P_{[2,3)}(z)=z^2=1$ and
\[
 P_{[m_{k+1},n_{k+1})}(1)=2P_{[m_k,n_k)}(1)+2,\quad\text{and}\quad P_{[m_{k+1},n_{k+1})}(-1)=2P_{[m_k,n_k)}(-1)-2.
\]
From this it follows by induction that
\begin{equation}\label{e:nmk1}
 P_{[m_k,n_k)}(1)=3\cdot 2^k-2\qquad\text{and}\qquad P_{[m_k,n_k)}(-1)=-2^k+2
\end{equation}
for all $k\ge0$. Thus
\begin{align*}
 \|P_{[m_k,n_k)}\|_L^2
 &\ge P_{[m_k,n_k)}(1)^2+P_{[m_k,n_k)}(-1)^2\\
 &\ge (3\cdot 2^k-2)^2+(-2^k+2)^2=10\cdot 2^{2k}-16\cdot 2^k+8.
\end{align*}
Thus in particular (as $n_k-m_k=4^k$)
\[
 \liminf_{k\to\infty}\frac{\|P_{[m_k,n_k)}\|_L^2}{n_k-m_k}\ge 10.
\]
As the ratio is always at most 10 by Theorem~\ref{t:twodim}, we deduce that \eqref{e:max10} holds.

To see an explicit case when $|P_{[m_k,n_k)}(z)|>3\sqrt{(n_k-m_k)}$ we can take
$z=e^{3\pi i/4}$. Then $z^4=-1$ and so \eqref{e:badrec} and \eqref{e:nmk1} imply
\begin{align*}
 P_{[m_{k+1},n_{k+1})}(z)
 &=(1+z)(-2^k+2) + (z^2-z^3)(3\cdot 2^k-2)+O(1)\\
 &=(-1-z+3z^2-3z^3)2^k+O(1).
\end{align*}
Hence
\[
 \frac{|P_{[m_k,n_k}(e^{3\pi i/4})|^2}{n_k-m_k}
 =\frac{1}{4}\big|-1-e^{3\pi i/4}+3e^{6\pi i/4}-3e^{9\pi i/4}\big|^2+o(1)=5+\tfrac{7}{\sqrt{2}}+o(1).
\]

\end{document}